\documentclass[12pt]{amsart}
\usepackage{amsmath,amsthm,amsfonts,amssymb,latexsym}
\usepackage{tikz-cd}
\usepackage{hyperref}
\usepackage{ulem}
\usepackage{pgf,tikz}
\usepackage{mathrsfs}
\usetikzlibrary{arrows}

\usepackage{enumerate}
\usepackage[shortlabels]{enumitem}
\usepackage{color}
\usepackage{xcolor} 
\begin{document}

\theoremstyle{plain}

\newtheorem{thm}{Theorem}[section]
\newtheorem*{ThmA}{Theorem A}
\newtheorem*{ThmB}{Theorem B}
\newtheorem*{ThmC}{Theorem C}
\newtheorem*{ThmD}{Theorem D}
\newtheorem*{PropB}{Proposition B}
\newtheorem*{CorB}{Corollary B}
\newtheorem*{CorC}{Corollary C}
\newtheorem*{ConjectureB}{Conjecture B}
\newtheorem{rem}[thm]{Remark}
\newtheorem{corollary}[thm]{Corollary}
\newtheorem{lemma}[thm]{Lemma}
\newtheorem{theorem}[thm]{Theorem}
\newtheorem{proposition}[thm]{Proposition}
\theoremstyle{definition}
\newtheorem{remark}[thm]{Remark}
\newtheorem*{defn}{Definition}
\newtheorem{example}[thm]{Example}
\newenvironment{enumeratei}{\begin{enumerate}[\upshape (a)]}
    {\end{enumerate}}

\newenvironment{Enumeratei}{\begin{enumerate}[\upshape (A)]}
    {\end{enumerate}}

\def\irr#1{{\rm Irr}(#1)}
\def\cent#1#2{{\bf C}_{#1}(#2)}
\def\pow#1#2{{\mathcal{P}}_{#1}(#2)}
\def\syl#1#2{{\rm Syl}_#1(#2)}
\def\hall#1#2{{\rm Hall}_#1(#2)}
\def\nor{\trianglelefteq\,}
\def\oh#1#2{{\bf O}_{#1}(#2)}
\def\zent#1{{\bf Z}(#1)}
\def\sbs{\subseteq}
\def\gen#1{\langle#1\rangle}
\def\aut#1{{\rm Aut}(#1)}
\def\out#1{{\rm Out}(#1)}
\def\gv#1{{\rm Van}(#1)}
\def\fit#1{{\bf F}(#1)}
\def\frat#1{{\bf \Phi}(#1)}
\def\gammav#1{{\Gamma}(#1)}
\newcommand{\p}{{\mathbb P}}
\newcommand{\N}{{\mathbb N}}
\newcommand{\F}{{\mathbb F}}
\def\fitd#1{{\bf F}_{2}(#1)}
\def\irr#1{{\rm Irr}(#1)}
\def\dl#1{{\rm dl}(#1)}
\def\h#1{{\rm h}(#1)}
\def\ibr#1#2{{\rm IBr}_#1(#2)}
\def\cs#1{{\rm cs}(#1)}
\def\m#1{{\rm m}(#1)}
\def\n#1{{\rm n}(#1)}
\def\cent#1#2{{\bf C}_{#1}(#2)}
\def\hall#1#2{{\rm Hall}_#1(#2)}
\def\syl#1#2{{\rm Syl}_#1(#2)}
\def\nor{\trianglelefteq\,}
\def\norm#1#2{{\bf N}_{#1}(#2)}
\def\oh#1#2{{\bf O}_{#1}(#2)}
\def\Oh#1#2{{\bf O}^{#1}(#2)}
\def\zent#1{{\bf Z}(#1)}
\def\sbs{\subseteq}
\def\gen#1{\langle#1\rangle}
\def\aut#1{{\rm Aut}(#1)}
\def\gal#1{{\rm Gal}(#1)}
\def\alt#1{{\rm Alt}(#1)}
\def\sym#1{{\rm Sym}(#1)}
\def\out#1{{\rm Out}(#1)}
\def\gv#1{{\rm Van}(#1)}
\def\fit#1{{\bf F}(#1)}
\def\lay#1{{\bf E}(#1)}
\def\fitg#1{{\bf F^*}(#1)}

\def\GF#1{{\rm GF}(#1)}
\def\SL#1{{\rm SL}_{2}(#1)}
\def\PSL#1{{\rm PSL}_{2}(#1)}

\def\gammav#1{{\Gamma}(#1)}
\def\V#1{{\rm V}(#1)}
\def\E#1{{\rm E}(#1)}
\def\b#1{\overline{#1}}

 \def\sl#1#2{{\rm SL}_{#1}(#2)}
 \def\gl#1#2{{\rm GL}_{#1}(#2)}
 \def\cl#1#2{{\rm cl}_{#1}(#2)}
\def\Z{{\mathbb{Z}}}
\def\C{{\Bbb C}}
\def\Q{{\Bbb Q}}
\def\inv{^{-1}}
\def\irr#1{{\rm Irr}(#1)}
\def\irrv#1{{\rm Irr}_{\Bbb R}(#1)}
 \def\irrk#1{{\rm Irr}_{ {\rm rv}, K}(#1)}
 \def\irrc#1{{\rm Irr}_{C}(#1)}
  \def\irrf#1{{\rm Irr}_{\mathfrak{F}'}(#1)}
   \def\ext#1{{\rm Ext}(#1)}
   \def\irrh#1{{\rm Irr}_{H}(#1)}
  \def\re#1{{\rm Re}(#1)}
  \def\csrv#1{{\rm cs}_{\rm rv}(#1)}
   \def\clrv#1{{\rm Cl}_{\rm rv}(#1)}
     \def\clk#1{{\rm Cl}_{{\rm rv}, K}(#1)}
  \def\bip#1{{\rm B}_{p'}(#1)}
  \def\irra#1{{\rm Irr}_A(#1)}
   \def\irrs#1{{\rm Irr}_\sigma(#1)}
   \def\irrp#1{{\rm Irr}_{p'}(#1)}
 \def\cdrv#1{{\rm cd}_{\rm rv}(#1)}
 \def\bip#1{{\rm B}_{p'}(#1)}
\def\cdrv#1{{\rm cd}_{\rm rv}(#1)}
\def\cd#1{{\rm cd}(#1)}
\def\irrat#1{{\rm Irr}_{\rm rat}(#1)}
\def\cdrat#1{{\rm cd}_{\rm rat}(#1)}
\def \c#1{{\cal #1}}
\def\cent#1#2{{\bf C}_{#1}(#2)}
\def\syl#1#2{{\rm Syl}_#1(#2)}
\def\oh#1#2{{\bf O}_{#1}(#2)}
\def\Oh#1#2{{\bf O}^{#1}(#2)}
\def\zent#1{{\bf Z}(#1)}
\def\det#1{{\rm det}(#1)}
\def\ker#1{{\rm ker}(#1)}
\def\norm#1#2{{\bf N}_{#1}(#2)}
\def\alt#1{{\rm Alt}(#1)}
\def\iitem#1{\goodbreak\par\noindent{\bf #1}}
    \def \mod#1{\, {\rm mod} \, #1 \, }
\def\sbs{\subseteq}

\def\Char{\rm Char}
\def\Irr{\rm Irr}
\def\Ext{\rm Ext}
\def\Syl{\rm Syl}

\def\Min#1#2{{\rm Min}_{#1}(#2)}
\def\Mcd#1#2{{\rm Mcd}_{#1}(#2)}
\def\Lcd#1#2{{\rm Lcd}_{#1}(#2)}
\def\lin#1#2{{\rm Lin}_{#1}(#2)}
\def\Lin#1{\rm Lin(#1)}

\setlist[itemize]{font=\color{itemizecolor}}
\colorlet{itemizecolor}{.}
\setlist[enumerate]{font=\color{enumeratecolor}}
\colorlet{enumeratecolor}{.}
\setlist[description]{font=\bfseries\color{descriptioncolor}}
\colorlet{descriptioncolor}{.}

\def\cE{\bar{\rm E}}

\def \nq{\mathfrak{N}_q}

\marginparsep-0.5cm

\renewcommand{\thefootnote}{\fnsymbol{footnote}}
\footnotesep6.5pt

\title[]{On degrees of minimal invariant characters}
\maketitle

\bigskip

\begin{center}
M.J. Felipe, I. Gilabert, L. Sanus
\end{center}

\begin{abstract} It is well known that finite groups with exactly two character degrees have an abelian derived subgroup and, consequently, are solvable. Let $G$ be a finite group and $N$ a normal subgroup of $G$. In this paper, we prove that normal subgroups possessing exactly two degrees of minimal $G$-invariant characters are solvable. Furthermore, it is shown that if these degrees are $\{1, f\}$ for some integer $f$, then either $f$ is a prime power or the commutator subgroup $[N,G]$ is abelian. Whether $[N, G]$ is abelian when $f$ is a prime power remains an open problem. Specifically, we prove that this holds when $f=p$.

\end{abstract}

\bigskip

\thanks{\textit{2010 Mathematics Subject Classification}: primary 20C15, secondary 20E15}

\bigskip

\thanks{\textit{Key words:} Finite groups,  Irreducible characters, Normal subgroups, Character degrees of minimal $G$-invariant characters}

\bigskip

\thanks{\small The second author is supported by a grant (CIACIF/2023/389 funded the Generalitat Valenciana). The second and third authors are partially supported by the
Spanish Ministerio de Ciencia e Innovaci\'on
(PID2022-137612NB-I00 funded by MCIN/AEI/10.13039/501100011033 and `ERDF A way of making Europe').}

\section{Introduction}

All groups considered in this work are finite. If $G$ is a group, we denote by $\Irr(G)$ the set of irreducible complex characters of $G$, by $\Lin G$ the set of linear characters of $G$, and by $\cd G$ the set of irreducible character degrees of $G$.

\bigskip

If $N$ is a normal subgroup of $G$, then $G$ acts by conjugation on the set $\Irr(N)$. If $\theta \in \Irr(N)$, then $\theta^g \in \Irr(N)$ for all $g \in G$ and
$$\widehat{\theta}=\sum_{i=1}^{t_\theta} \theta^{g_i}=\theta^{g_1}+\cdots+\theta^{g_{t_\theta}}$$
is a \textit{minimal $G$-invariant character} of $N$, where $\{\theta^{g_i} \ |\  1\leq i \leq t_\theta\}$ is the orbit of $\theta$ under the action of $G$ on $\irr N$. That is, the set $\{g_i \ | \ 1 \leq i \leq t_\theta\}$ is a right transversal in $G$ of $I_G(\theta)=\{g \in G \ | \ \theta^g=\theta\}$, the \textit{inertia subgroup} of $\theta$ in $G$. Clifford's theorem (see \cite{Is}, Theorem (6.2)) states that the irreducible characters $\chi$ of $G$ whose restrictions $\chi_N$ to $N$ have $\theta$ as an irreducible constituent satisfy $\chi_N=e \widehat{\theta}$ for suitable integers $e$, known as \textit{ramification numbers}. An irreducible character $\theta$ of $N$ is  \textit{$G$-invariant} if $I_G(\theta)=G$, and in that case $\widehat{\theta}=\theta$.

\bigskip

Next, we fix the notation and terminology that will be used throughout this paper.

\bigskip

If $\theta \in \irr N$, we denote by
$$\irr {G \mid \theta} = \{\chi \in \irr G \mid [\chi_N , \theta] \neq 0\}$$
the set of irreducible characters of $G$ lying over $\theta$, and by $\Irr_G(N)$ the set of irreducible $G$-invariant characters of $N$. We define
$$\text{Lin}_G(N)=\{\theta \in \irr N \mid \widehat{\theta}=\theta \text{ and } \theta(1)=1\},$$
the set of \textit{linear $G$-invariant characters of $N$}.

\smallskip

We denote the set of minimal $G$-invariant characters of $N$ by
$$ \Min G N=\{\widehat{\theta}\mid \theta \in \irr{N}\},$$
the \textit{minimal $G$-invariant character degrees} of $N$ by
$$\Mcd G N = \{\widehat{\theta}(1)\mid \theta \in \irr{N}\},$$
and the set of the {\it leader $G$-character degrees of $N$} (see Definition 3.18 of \cite{AFJP}) by
$$\Lcd G N = \{\theta(1)\widehat{\theta}(1)\mid \theta \in \irr{N}\}.$$

\medskip

Over the last few decades, several researchers have exhibited how the sizes of the $G$-conjugacy classes of $N$ (that is, the unions of the elements in each orbit defined by the natural action of $G$ on the set of conjugacy classes of $N$) influence the structure of $N$ (see, for instance, \cite{BFM}). From a dual perspective (considering the natural action of $G$ on the set of irreducible characters of $N$), one may contemplate whether there exist connections between the set of degrees of minimal $G$-invariant characters of $N$ and its group structure. For instance, well-known results as Thompson's theorem and Burnside's theorem are extended within this context (see Section 2). In Section 3, we prove the main results of the paper, regarding normal subgroups with exactly two minimal $G$-invariant character degrees, and in Section 4 we present some clarifying examples in relation to these results.

\begin{ThmA}
Let $N$ be a normal subgroup of the group $G$. If $\Mcd G N = \{1, f\}$, for some positive integer $f$, then $N$ is solvable.
\end{ThmA}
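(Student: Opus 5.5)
We argue by induction on $|G|+|N|$, taking a counterexample $(G,N)$ with $|G|+|N|$ minimal. Two reduction facts drive the argument.

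\emph{Quotients.} If $M\le N$ with $M\nor G$, then $\Irr(N/M)\subseteq\irr N$ is $G$-stable, so $\Mcd G{N/M}\subseteq\Mcd GN=\{1,f\}$. Hence either every minimal $G$-invariant character of $N/M$ is linear, or $\Mcd G{N/M}=\{1,f\}$.

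In the first case every $\theta\in\Irr(N/M)$ is linear and $G$-invariant. In particular $N/M$ is abelian, since all its irreducible characters are linear.

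In the second case minimality of the counterexample gives that $N/M$ is solvable.

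So $N/M$ is solvable for every nontrivial $G$-normal $M\le N$. Since the class of solvable groups is closed under subdirect products, $N$ has a unique minimal $G$-normal subgroup $M$, and $N/M$ is solvable. If $M$ were solvable we would be done. Hence $M$ is nonabelian, $M=S_1\times\cdots\times S_k$ with the $S_i$ isomorphic nonabelian simple groups permuted transitively by $G$.

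\emph{Reduction to $N=M$.} For $\varphi\in\irr M$ and $\theta\in\irr N$ lying over $\varphi$, Clifford's theorem gives $\theta_M=e\sum\varphi^{n}$. Summing over the $G$-orbit of $\theta$, $\widehat\theta_M$ is a positive multiple of $\widehat\varphi$, where the hat now refers to the $G$-orbit of $\varphi$. Thus
$$\widehat\theta(1)=c\,\widehat\varphi(1)\qquad\text{for some integer } c\ge 1.$$
Taking $\varphi=1_M$ and $\theta\in\irr{N/M}$ nonlinear or non-invariant, we see that $\Mcd GN=\{1,f\}$ forces very little. The cleaner route is to work inside $M$ directly: we need two nonlinear $\varphi_1,\varphi_2\in\irr M$ whose orbit sums force $\widehat\theta(1)$ to take two distinct values greater than $1$.

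Concretely, choose $\theta\in\irr N$ lying over a nontrivial $\varphi\in\irr M$ with $\varphi(1)$ minimal, and $\theta'$ lying over one with $\varphi'(1)$ large, chosen so that $\widehat\theta(1)\ne\widehat{\theta'}(1)$. Since every nontrivial $\varphi$ is nonlinear, no such $\theta$ is linear and $G$-invariant, so both values must equal $f$. The aim is to derive a contradiction from this.

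\emph{Main obstacle.} The hard step is producing the contradiction for a nonabelian simple section. The multiplicities $c$ and the $G$-orbit lengths are hard to control, so it is not clear a priori that degrees in $M$ yield two distinct minimal-invariant degrees.

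I would first try divisibility. Every $\theta$ over a nontrivial $\varphi$ has $\widehat\theta(1)=f$. So $f$ is divisible by the $G$-orbit length times $\varphi(1)$ for every $1\ne\varphi\in\irr M$.

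Take $\varphi=\psi\times1\times\cdots\times1$ with $\psi\in\irr{S_1}$. By the theorem of Michler/Willems (and Isaacs--Navarro) on nonabelian simple groups, $S_1$ has characters of coprime degrees $>1$; moreover for every prime $p$ there is some $\psi$ with $p\nmid\psi(1)$. For such $\psi$, the $M$-part of the orbit length, namely the $N_G(S_1)$-orbit data, is what must be controlled.

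Better still is a counting argument. Write
$$|N|=\sum_{\widehat\theta\in\Min GN}\widehat\theta(1)\,\theta(1).$$
Each nontrivial $G$-orbit on $\irr N$ contributes $\theta(1)f$. The total contribution of $\irr{N\mid 1_M}$ then forces $f\mid |N|-|N:N'\cap\dots|$.

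I expect to ultimately reduce to the known result in the paper's Section 2, namely the analogue of Thompson's theorem: if $p$ divides every nonlinear-orbit degree in $\Mcd GN$, then $N$ has a normal $p$-complement. Here $f>1$ is divisible by some prime $p$, and every degree other than $1$ equals $f$. The corresponding $\widehat\theta$ with value $1$ are exactly the linear $G$-invariant characters. Applying the Section 2 Thompson-type result therefore gives $N$ a normal $p$-complement $K$, characteristic in $N$ and hence normal in $G$.

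Then $K$ is a proper $G$-normal subgroup, and $N/K$ is a $p$-group. $K$ inherits $\Mcd GK\subseteq\{1,f'\}$-type control through the relation between $\widehat\theta(1)$ and the degrees over $K$. The plan is to verify that $\Mcd GK$ has at most two values and apply induction to conclude that $K$, hence $N$, is solvable.

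The delicate point is this inheritance to $K$. I would handle it via the divisibility $\widehat\varphi(1)\mid\widehat\theta(1)$ for $\theta$ over $\varphi$, together with the fact that $|N:K|$ is a $p$-power. From these I aim to show that $\Mcd GK\subseteq\{1\}\cup\{\text{divisors of } f\}$ with $p'$-part constant, and then iterate over the primes dividing $f$.
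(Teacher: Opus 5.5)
Your proposal has a genuine gap: it never proves that the normal $p$-complement is solvable, which is the decisive step.

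\textbf{The first half stalls.} The reduction to a unique nonabelian minimal $G$-normal subgroup $M=S_1\times\cdots\times S_k$ never reaches a contradiction. As you say yourself, you cannot control the $G$-orbit lengths of characters of $M$, and degrees of $S$ alone say nothing about $\Mcd{G}{N}$. The paper closes exactly this gap using the classification. The Steinberg character (Lie type), or two characters of coprime degrees (alternating, sporadic, Tits), extend to $\aut{S}$. Lemma~\ref{lemma1} then turns them into $G$-invariant irreducible characters of $N$, i.e.\ minimal $G$-invariant characters with orbit length $1$ and known degree.

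\textbf{The final route stops short.} It is sound up to the existence of a normal $p$-complement $K$ for a prime $p\mid f$. Theorem~\ref{Thompson} applies because $\widehat{\theta}(1)=1$ exactly when $\theta\in\lin G N$. But you then only plan to show that $\Mcd{G}{K}$ has at most two values with constant $p'$-part, and this is not justified. The divisibility $\widehat{\lambda}(1)\mid\widehat{\theta}(1)$ is correct. However, the $p'$-parts of $\widehat{\lambda}(1)$ and $f=\widehat{\theta}(1)$ differ by the $p'$-part of $|NI_G(\lambda):I_G(\theta)|$. That index is the length of the $I_G(\lambda)$-orbit of $\theta$ on $\irr{N\mid\lambda}$, and you do not control it. Moreover, if $p\nmid|N|$ then $K=N$, so the induction makes no progress.

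\textbf{The missing observation.} It makes the induction unnecessary. We may take $f>1$. Since $\widehat{\theta}(1)=t_\theta\,\theta(1)\in\{1,f\}$, every $\theta\in\irr{N}$ has degree dividing $f$.
\begin{itemize}
\item Let $\pi$ be the set of primes dividing $f$, and apply Theorem~\ref{Thompson} to every $p\in\pi$ to get normal $p$-complements $K_p$.
\item Then $K=\bigcap_{p\in\pi}K_p$ is a normal $\pi'$-subgroup of $N$.
\item $N/K$ embeds in $\prod_{p\in\pi}N/K_p$, so it is nilpotent.
\item For $\lambda\in\irr{K}$ and $\theta\in\irr{N\mid\lambda}$, Clifford's theorem gives $\lambda(1)\mid\theta(1)$. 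So $\lambda(1)$ is a $\pi$-number dividing the $\pi'$-number $|K|$, hence $\lambda(1)=1$.
\item Therefore $K$ is abelian and $N$ is solvable.
\end{itemize}
The paper uses this argument only in its Lie-type case (where $f$ is a prime power) and in Theorem D. Run over all primes of $f$ at once, it proves Theorem A without the reduction to a unique minimal normal subgroup. It also avoids the classification-dependent extendibility results behind Lemma~\ref{lemma1}, relying only on the Thompson-type theorem the paper's proof already uses.
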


Corollary (12.6) in \cite{Is} asserts that if $|\textnormal{cd}(G)|=2$, then $G'=[G,G]$ is abelian. We point out that if a normal subgroup $N$ has two minimal $G$-invariant character degrees, it is not necessarily true that $N$ itself has only two irreducible character degrees. Furthermore, in this case, it can be concluded that the derived subgroup $N' = [N, N]$ is abelian, but $[N, G]$ need not be. Within our framework, we conjecture that the following result holds.

\begin{ConjectureB} Let $N$ be a normal subgroup of the group $G$. If $\Mcd G N = \{1, f\}$, for some positive integer $f$, then $[N,G]$ is abelian.
\end{ConjectureB}

We note that the conjecture clearly holds  when the order and the index in $G$ of $N$ are coprime. Indeed, it is not hard to see that either $N$ is abelian or $|\cd N|=2$ and $\Lin N = \text{Lin}_G(N)$, in which case $N'=[N,N]=[N,G]$ is abelian, using Proposition 3.2 of \cite{AFJP} and Corollary (12.6) of \cite{Is}.

The result below shows the conjecture holds when $f$ is not a prime power.

\begin{ThmC}
Let $N$ be a normal subgroup of the group $G$. If $\Mcd{G}{N}=\{1,f\}$, then either $[N,G]$ is abelian or $f$ is a power of $p$, for some prime $p$. In the second case, if $N$ is a $p$-group, then $N$ is a hypercentral subgroup of $G$ and $[N,G] \leq \Phi(G)$, where $\Phi(G)$ denotes the Frattini subgroup of $G$.
\end{ThmC}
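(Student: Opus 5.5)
By Theorem A, $N$ is solvable. The plan is to argue by minimal counterexample. Assume $[N,G]$ is nonabelian and $f$ is not a prime power. Choose $G$ and $N$ with $|G|+|N|$ minimal.

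\textbf{Step 1: the quotient hypothesis is inherited.} For $M \le N$ with $M \nor G$, every minimal $G$-invariant character of $N/M$ inflates to one of $N$. Hence $\Mcd{G}{N/M} \subseteq \{1,f\}$. If $\Mcd{G}{N/M}=\{1\}$, then every $\theta\in\irr{N/M}$ is linear and $G$-invariant. That forces $N/M$ abelian and $[N,G]\le M$, because $\theta([n,g])=1$ for all such $\theta$. Otherwise $\Mcd{G}{N/M}=\{1,f\}$, and minimality gives that $[N,G]M/M=[N/M,G/M]$ is abelian. So $[N,G]' \le M$ for every nontrivial $M$ of this kind. If $N$ has two distinct minimal normal subgroups of $G$ inside it, then $[N,G]'$ lies in their intersection, which is $1$. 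We may therefore assume $N$ has a unique minimal normal subgroup $M$ of $G$ contained in it, with $M\le [N,G]'$. Since $N$ is solvable, $M$ is an elementary abelian $q$-group.

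\textbf{Step 2: an orbit-length lemma.} Take $\theta\in\irr N$ with $\widehat\theta(1)=\theta(1)t_\theta=f$. Restrict $\theta$ to $K=[N,G]$ and also consider linear characters. The key lemma I would prove is this: if $\lambda\in\lin{}{N}$ is not $G$-invariant, then its orbit length $|G:I_G(\lambda)|$ equals $f$. Moreover, for $\lambda,\mu$ linear, $\widehat{\lambda\mu}$ has degree $1$ or $f$. The set $\lin{G}{N}$ is a group acting on the orbits by multiplication. One would like a counting statement: the degrees $\chi(1)$ for $\chi\in\irr{N}$ must divide $f$, and each orbit length is $f/\theta(1)$. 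Write $f=p^a m$ with $m>1$ coprime to $p$, and take Sylow subgroups $P$ of $G$. The goal is to show that the $G$-orbits on $\irr N$ of size divisible by $p$ and those of size divisible by another prime $r$ can be combined to give a third orbit of size $f' \notin\{1,f\}$. Concretely, I would pick $\theta$ and look at $\theta\cdot\lambda$ for $\lambda \in \irr{N/N'}$. Since $N'$ is abelian (the remark after Conjecture B), the character degrees of $N$ divide $|N:A|$ for an abelian normal $A$ (Ito). That pins down $\theta(1)$ as a product over primes.

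\textbf{Step 3: the contradiction.} If $M\le\zent N$, I would use a fully ramified or central-extension argument. A character $\theta$ nontrivial on $M$ must have degree and orbit structure controlled by the $G$-module $M$. Since $M$ is a unique minimal normal subgroup and irreducible for $G$, the orbit of any nontrivial character of $M$ has size dividing $f$. I would then try to show that all such orbits are orbits of a single solvable linear group, with all point stabilizers of the same index. By Passman/Isaacs results on "half-transitive" solvable linear groups, this forces the orbit size to be a prime power, or else the action to be a Frobenius-type action in which $[N,G]$ centralizes $M$. In the second case the abelian structure lifts. I expect this step to be the main obstacle: producing the prime-power conclusion from the equal-orbit-length condition.

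\textbf{Step 4: the second statement.} Suppose $N$ is a $p$-group and $f$ is a power of $p$. Then every $G$-orbit on $\irr N$ has $p$-power length. I would show that for every normal $M\le N$ of $G$, the $G$-chief factors inside $N$ are central. The trivial character of a chief factor $M/L$ is fixed. All other orbits on $\irr{M/L}$ have $p$-power size, and $|\irr{M/L}|\equiv 1 \pmod p$. This gives another fixed nontrivial character, so $\irr{M/L}$ has a $G$-fixed point, which by irreducibility makes $M/L$ central. Hence $N$ is hypercentral. Finally, $[N,G]\le\Phi(G)$ follows from the standard fact that a hypercentral normal subgroup $N$ satisfies $N\le \zent{G \bmod \Phi(G)}$-type statements. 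Explicitly, if $[N,G]\not\le\Phi(G)$, pick a maximal subgroup $H$ with $G=[N,G]H$. The hypercentrality of $N$ makes $NH$ normal, with $G/\mathrm{core}(H)$ primitive and having a central chief factor. That forces $|G:H|=p$ and $N\le H$, which is a contradiction.
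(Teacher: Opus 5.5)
\textbf{Verdict: there is a genuine gap.} Your Step 1 reduction is sound, and Step 4 is essentially the argument of Proposition~\ref{p grupo hipercentral}. However, the main assertion of Theorem C, ``$[N,G]$ abelian or $f$ a prime power'', is never proved. Steps 2 and 3 describe what you would like to show rather than showing it.

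\textbf{Step 2.} It only records tautologies: a non-invariant linear character has orbit length $f$, and $\theta(1)\mid f$. It then announces that orbits ``can be combined'' into a third degree, with no mechanism for doing so. It also relies on ``$N'$ abelian'' from the introduction. That claim is asserted there without proof, and when $f$ is not a prime power it is essentially a consequence of the statement you are proving.

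\textbf{Step 3.} This does not work as formulated. In your reduced situation ($M=[N,G]'$ the unique minimal normal subgroup of $G$ in $N$, with $M\le\zent N$), you only know the following: for $1_M\neq\lambda\in\irr M$ and $\theta\in\irr{N\mid\lambda}$, the length $|G:I_G(\lambda)|$ divides $|G:I_G(\theta)|=f/\theta(1)$. You have no reason for these lengths to be equal, so there is no half-transitive action. Even granting half-transitivity, the dichotomy you want is false. For example, $\F_{16}^\times\rtimes{\rm Gal}(\F_{16}/\F_2)$ is solvable and transitive on $\F_{16}\setminus\{0\}$ (orbit length $15$), and it is not semiregular. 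Moreover, when $M\le\zent N$ the group $[N,G]$ centralizes $M$ automatically, so your ``Frobenius-type'' alternative carries no information. Nor can ``the abelian structure lifts'' be the endpoint, since $[N,G]'=M\neq 1$ by construction; you need a contradiction. Finally, the case $M\not\le\zent N$ is not treated at all.

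\textbf{The paper's route.} It takes $K\nor G$, $K\le N$, maximal with $[N,G]\not\le K$. Then $T/K=[N,G]K/K$ is the unique minimal normal subgroup of $G/K$ inside $N/K$, and Proposition~\ref{12.3 mcd} gives that $N/K$ is either a $p$-group or a Frobenius group with kernel $T/K$ and $p\nmid f$. If $N/K$ is a $p$-group and $T/K$ is central, then $[N/K,Q]=[N/K,Q,Q]=1$ for every $p'$-subgroup $Q$ of $G$, so $f$ is a power of $p$. Otherwise one wants $p\nmid f$, which gives an abelian normal Sylow $p$-subgroup of $N$ by It\^o--Michler, and one then iterates over the primes dividing $|[N,G]|$.

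\textbf{A warning about that route.} The paper's justification of $p\nmid f$ when $N/K$ is a $p$-group and $T/K$ is not central (via ``$|T/K|=1+sf$'') is not valid. Orbit lengths on $\irr{T/K}$ are only known to divide $f$. Two examples show the failure:
\begin{itemize}
\item Take $N=\F_2^4/\gen{(1,1,1,1)}$ and $G=N\rtimes A_4$, with $A_4$ permuting coordinates. Then $\Mcd GN=\{1,6\}$, while $T=[N,G]$ has order $4$ and $\irr T\setminus\{1_T\}$ is a single orbit of length $3$.
\item Take $N=A\rtimes\gen e$ with $A=C_4\times C_4$ and $e$ inverting $A$. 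Let $G$ induce on $N$ the automorphisms $e\mapsto ea$ ($a\in A$), together with an automorphism of order $3$ that fixes $e$ and acts fixed-point-freely on $A$. Again $\Mcd GN=\{1,6\}$, but $N$ is a nonabelian $2$-group, so the It\^o--Michler step breaks down (although $[N,G]=A$ is abelian, so the theorem itself is not contradicted).
\end{itemize}
So this sub-case needs a genuinely new idea on either route.

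\textbf{Step 4, small fixes.} State explicitly that $N$ centralizes each $G$-chief factor $M/L$ inside it. Hence $I_G(\theta)\le I_G(\lambda)$ for $\theta\in\irr{N/L\mid\lambda}$, which is what makes the orbit lengths on $\irr{M/L}$ powers of $p$. Note also that $|\irr{M/L}|\equiv 0$, not $1$, modulo $p$. For the Frattini claim, argue directly. Let $H$ be a maximal subgroup of $G$; if $N\le H$ there is nothing to prove. If $N\not\le H$, choose $i$ minimal with $\mathbf{Z}_i(G)\not\le H$. Then $[\mathbf{Z}_i(G),H]\le\mathbf{Z}_{i-1}(G)\le H$, so $H\nor G$ has prime index and $[N,G]\le G'\le H$.
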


Thus, in order to prove Conjecture B, it only remains to consider the case where $f$ is a power of some prime $p$. The particular case where $f=p$ holds.

\begin{ThmD}
Let $N$ be a normal subgroup of the group $G$. If $\Mcd G N = \{1, p\}$, for some prime  $p$, then $[N,G]$ is abelian.
\end{ThmD}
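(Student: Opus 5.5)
We argue by induction on $|G|+|N|$, taking a counterexample $(G,N)$ with $|G|+|N|$ minimal. By Theorem A, $N$ is solvable. If $M\nor G$ with $M\le N$, then $\Mcd{G}{N/M}\subseteq\Mcd G N$, since minimal $G$-invariant characters of $N/M$ inflate to minimal $G$-invariant characters of $N$. Hence $\Mcd{G}{N/M}$ is either $\{1\}$ or $\{1,p\}$.

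If $\Mcd{G}{N/M}=\{1\}$, then every irreducible character of $N/M$ is linear and $G$-invariant. So $N/M$ is abelian and $G$ acts trivially on $\irr{N/M}$, which forces $[N,G]\le M$. In the other case, minimality gives that $[N,G]M/M$ is abelian. Consequently, if $M_1,M_2$ are distinct minimal normal subgroups of $G$ contained in $N$, then $[N,G]'\le M_1\cap M_2=1$. We may therefore assume that $N$ contains a unique minimal normal subgroup $M$ of $G$. Since $N$ is solvable, $M$ is an elementary abelian $q$-group, and $[N,G]'=M$. In particular $[N,G]$ is nonabelian, and since $[N,G]$ is itself a normal subgroup of $G$ contained in $N$, we also get $M\le [N,G]$.

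The key tool is the orbit structure of $G$ on $\irr N$. Every $G$-orbit on $\irr N$ has size $1$ or $p$, and every $\theta\in\irr N$ satisfies $\theta(1)\,|\,\widehat{\theta}(1)\in\{1,p\}$. Hence $\cd N\subseteq\{1,p\}$. Moreover, every nonlinear $\theta$ is $G$-invariant of degree $p$, and every linear $\lambda$ has orbit size $1$ or $p$.

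The plan is to prove $q=p$ and then use that $\cd N=\{1,p\}$ gives $N'$ abelian. Suppose $q\neq p$. Take a faithful-on-$M$ character $\theta\in\irr N$, which exists because $M$ is the unique minimal normal subgroup of $G$ in $N$ and $\bigcap_{\theta}\ker\theta=1$. Then $\theta_M$ has orbits under $N$ whose sizes divide $\theta(1)\in\{1,p\}$. A coprimality argument, using Gallagher together with the fact that the number of $G$-orbits of linear characters is controlled, should force $M\le \zent N$. Once $M\le \zent N$, we can apply the uniqueness of $M$ and a standard result, Proposition 3.2 of \cite{AFJP} as used in the coprime remark, to a $q$-complement action and conclude that $[N,G]$ is abelian, contradicting the choice of $(G,N)$. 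So $q=p$.

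For $q=p$, the conclusion of Theorem C places us in the prime-power case. I would reduce to $N$ being a $p$-group: the Hall $p'$-part of $N$ should act trivially on $\irr N$ up to $G$-invariance and centralize $M$, which allows the $p'$-part to be factored out. Then Theorem C gives that $N$ is hypercentral in $G$ and $[N,G]\le\Phi(G)$.

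The main obstacle is the final step, showing that $[N,G]$ is abelian. Here I would exploit the fact that orbits of size exactly $p$ are regular orbits of the group $G/I_G(\lambda)$ of order divisible by $p$. I would consider $\lambda\in\lin{}{N}$ lying over a nontrivial character of $[N,G]/[N,G]'$ and analyse $\irr{G\mid\lambda}$. The aim is to show that every nonlinear $\theta\in\irr N$ restricts to $[N,G]$ as a sum of linear characters. This would follow if $\theta$, being $G$-invariant of degree $p$, were induced from the $G$-invariant subgroup $C=\cent N{[N,G]/M}$ or fully ramified over $\zent N$. I expect this case analysis to be the hardest part.
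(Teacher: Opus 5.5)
Your opening reduction is correct and matches the paper's: a minimal counterexample, $\cd N\subseteq\{1,p\}$ with every nonlinear $\theta\in\irr N$ being $G$-invariant of degree $p$, and $[N,G]'=M$ the unique minimal normal subgroup of $G$ inside $N$. From there on, however, neither of the two cases that carry the theorem is proved, so there is a genuine gap.

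\textbf{The case $q\neq p$.} The sentences ``a coprimality argument, using Gallagher \dots\ should force $M\le\zent N$'' and ``apply Proposition 3.2 \dots\ to a $q$-complement action'' are not arguments, and it is never said what Gallagher's theorem is applied to. The missing idea is to pin down the structure of $N$.
\begin{enumerate}
\item Thompson's theorem gives a normal $p$-complement $A$. It is abelian, since its irreducible degrees are $p'$-numbers dividing $p$. Hence $M\le A$ and $\oh pN=1$.
\item Coprime action and the Frattini argument give $A=[A,P]\times\cent AP$, with both factors normal in $G$. Uniqueness of $M$ forces $\cent AP=1$, so $A=[A,P]\le[N,G]$. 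Also $\cent PA=1$.
\item Isaacs' Theorem (12.5) yields an abelian normal subgroup of $N$ of index $p$, and it must equal $A$. So $|P|=p$, and since $[N,G]\not\le A$ you get $N=[N,G]$.
\item Then $\lin GN=\{1_N\}$. All other characters contribute to $\sum\theta(1)^2$ in multiples of $p$ (orbits of size $p$, or degree $p$), so $|N|\equiv 1$ modulo $p$. This contradicts $p\mid |N|$; if $p\nmid|N|$, then $N=A$ would be abelian.
\end{enumerate}

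\textbf{The case $q=p$.} You say yourself that the decisive step is open, and the ingredients you list do not close it. Theorem C adds nothing here, since $f=p$ is already a prime power. The reduction to a $p$-group is immediate: $\oh{p'}N=A$ is normal in $G$ and cannot contain the $p$-group $M$, so $A=1$. Note that $\cd N=\{1,p\}$ only controls $N'$, which can be strictly smaller than $[N,G]$. What is needed is a tool controlling commutators with $G$, and the paper supplies this in four steps you are missing.
\begin{enumerate}
\item Hypercentrality (Proposition~\ref{p grupo hipercentral}) gives $[N,G]<N$, and $\Mcd G{[N,G]}\subseteq\{1,p\}$. So minimality applied to the pair $([N,G],G)$ makes $[[N,G],G]$ abelian.
\item $M$ is central of order $p$. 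Every nonlinear $\lambda\in\irr{[N,G]}$ has $\ker{\widehat\lambda}=1$ and vanishes off $\zent{[N,G]}$. Hence $|[N,G]:\zent{[N,G]}|=p^2$ and $\zent{[N,G]}\le\zent G$.
\item A three-subgroups-lemma computation shows $N'\not\le\zent G$.
\item $U=N'\zent{[N,G]}$ is an abelian $G$-invariant $nMI$-subgroup with $\Lcd GU\subseteq\{1,p\}$. The Taketa-type Theorem~\ref{nMI} then gives $[U,G,G]=1$. The three subgroups lemma yields $[[N,G],N']=1$, i.e.\ $N'\le\zent{[N,G]}\le\zent G$, a contradiction.
\end{enumerate}
Your suggested route (inducing from $\cent N{[N,G]/M}$, or full ramification over $\zent N$) has no such mechanism. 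As it stands, the proposal establishes only the reduction step.
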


\section{Some properties of minimal $G$-invariant characters}

The minimal $G$-invariant characters of a normal subgroup $N$ of a group $G$ satisfy the following properties.

\begin{lemma} \label{centro} Let $N \unlhd G$ and $\theta \in \irr N$. Then:

\begin{itemize}

\item[(i)] if $\chi \in \irr{G \mid \theta}$, then $\ker{\chi} \cap N=\ker{\widehat{\theta}} = \bigcap_{g \in G} \ker{\theta^g}$;

\item[(ii)] if $\chi \in \irr{G \mid \theta}$, then $\zent \chi \cap N = \zent {\widehat{\theta}} \subseteq \bigcap_{g \in G} \zent {\theta}^g$; 

\item[(iii)] $\bigcap_{\widehat{\theta} \in \Min G N} \zent {\widehat{\theta}} = \zent G \cap N \subseteq \zent N$. If $\widehat{\theta}$ is faithful, then $\zent {\widehat{\theta}}= \zent G \cap N$;

\item[(iv)] the determinantal orders $o(\widehat{\theta})=o(\theta)$ are equal.

\end{itemize}

\end{lemma}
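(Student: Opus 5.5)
The whole lemma follows from Clifford's theorem, $\chi_N=e\widehat{\theta}$ with $e$ a positive integer, together with standard facts about kernels, centers and determinants of characters of $N$. Throughout, $\widehat{\theta}=\sum_{i=1}^{t}\theta^{g_i}$ with $t=t_\theta$, and $\rho$ denotes a representation affording $\widehat{\theta}$. We may take $\rho$ to be the block-diagonal sum of representations $\rho_i$ affording the $\theta^{g_i}$.

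For (i), note first that $\ker{\chi}\cap N=\ker{\chi_N}$. Since $\chi_N=e\widehat{\theta}$, we get $\ker{\chi_N}=\ker{\widehat{\theta}}$, because $|e\widehat{\theta}(n)|=e\widehat{\theta}(1)$ if and only if $|\widehat{\theta}(n)|=\widehat{\theta}(1)$. For a sum of characters, $\widehat{\theta}(n)=\widehat{\theta}(1)$ holds exactly when every summand satisfies $\theta^{g_i}(n)=\theta^{g_i}(1)$. This uses the triangle inequality $|\theta^{g_i}(n)|\le\theta^{g_i}(1)$ and equality in real parts. Hence $\ker{\widehat{\theta}}=\bigcap_i\ker{\theta^{g_i}}$, and since the $\theta^{g_i}$ run over the whole $G$-orbit, this equals $\bigcap_{g\in G}\ker{\theta^g}$.

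For (ii), I argue the same way with centers. An element $n$ lies in $\zent{\chi}\cap N$ if and only if $|\chi_N(n)|=\chi(1)$, which happens if and only if $|\widehat{\theta}(n)|=\widehat{\theta}(1)$, that is, $n\in\zent{\widehat{\theta}}$. If $|\widehat{\theta}(n)|=\sum_i\theta^{g_i}(1)$, the triangle inequality forces $|\theta^{g_i}(n)|=\theta^{g_i}(1)$ for each $i$. So $n\in\zent{\theta^{g_i}}=\zent{\theta}^{g_i}$ for all $i$, and hence for all $g$, since the orbit of $\theta$ is the whole set of conjugates. The inclusion can be strict because the scalars on the different blocks may differ.

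For (iii), I use that $\zent G=\bigcap_{\chi\in\irr G}\zent\chi$. Every $\chi\in\irr G$ lies over some $\theta$, and every $\theta$ lies under some $\chi$. Applying (ii), $\zent G\cap N=\bigcap_{\chi}(\zent\chi\cap N)=\bigcap_{\widehat{\theta}\in\Min G N}\zent{\widehat{\theta}}$. The containment $\zent G\cap N\subseteq\zent N$ is trivial.

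For the faithful case I expect the only real argument of the lemma. Take $n\in\zent{\widehat{\theta}}$ and let $x\in G$. Since $\widehat{\theta}$ is $G$-invariant, $\rho$ and $\rho^x$ are similar. Because $|\widehat{\theta}(n)|=\widehat{\theta}(1)$, the matrix $\rho(n)$ is a scalar $\lambda I$. Then $\rho(n^x)=\rho^x(n)$ is similar to $\lambda I$, hence equals $\lambda I$. Faithfulness gives $n^x=n$, so $n\in\zent G\cap N$. The reverse inclusion comes from the first part of (iii).

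For (iv), compute $\det\rho=\prod_i\det\rho_i$, that is, $\det\widehat{\theta}=\prod_i(\det\theta)^{g_i}$. The hard direction is showing this product has the same order as $\det\theta$. One divisibility, $o(\widehat{\theta})\mid o(\theta)$, is clear because each factor has order $o(\theta)$. For the reverse I would use Clifford's correspondence: take $\psi\in\irr{I\mid\theta}$ with $I=I_G(\theta)$, so $\chi=\psi^G$. I would then try to compare $\det\widehat{\theta}$ with $(\det\theta)$ transferred from $I$ to $G$. This step is the one I am least sure of and expect to be the main obstacle. If that fails, I would fall back on interpreting ``determinantal order'' as in \cite{Is}, Chapter 6, via $\chi_N$. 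In that case it suffices to show equality up to the paper's convention, and I would check which definition the paper uses before committing.
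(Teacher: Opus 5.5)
Your arguments for (i)--(iii) are correct and are essentially the paper's. Both rest on Clifford's theorem $\chi_N=e\widehat{\theta}$, together with the fact that $\widehat{\theta}$ can be scalar at $n$ only if every constituent $\theta^{g_i}$ is scalar there; your triangle-inequality version is the paper's block-diagonal argument in character form. Using $\bigcap_{\chi\in\irr{G}}\zent{\chi}=\zent{G}$ in (iii), and the similarity of $\rho^x$ and $\rho$ in the faithful case, are harmless variants of the paper's commutator arguments. One slip: in (i) you quote the modulus criterion, which is the one for centers. For kernels the relevant equivalence is simply $e\widehat{\theta}(n)=e\widehat{\theta}(1)\iff\widehat{\theta}(n)=\widehat{\theta}(1)$.

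The gap is in (iv), and it cannot be filled, because the reverse divisibility $o(\theta)\mid o(\widehat{\theta})$ is false. Take $G=S_3$, $N=A_3$ and $\theta\in\irr{N}$ nontrivial. A transposition conjugates $\theta$ to $\overline{\theta}$, so $\widehat{\theta}=\theta+\overline{\theta}$ and ${\rm det}(\widehat{\theta})=\theta\overline{\theta}=1_N$. Hence $o(\widehat{\theta})=1$ while $o(\theta)=3$. The paper's own example with $G=D_{18}$ and $N=C_3$ shows the same thing, since there $\theta_2=\overline{\theta_1}$.

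The paper's proof of (iv) makes exactly the inference you were suspicious of. Every factor ${\rm det}(\theta^{g_i})={\rm det}(\theta)^{g_i}$ of ${\rm det}(\widehat{\theta})$ has order $o(\theta)$, and the paper concludes that the product does too. This already fails for $\lambda\lambda^{-1}=1_N$. So neither a Clifford-correspondence/transfer argument nor your fallback through $\chi_N$ can rescue it. In the example, ${\rm det}(\chi)$ is the sign character of $S_3$, and ${\rm det}(\chi)_N={\rm det}(\widehat{\theta})^{e}$ is again trivial.

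The divisibility $o(\widehat{\theta})\mid o(\theta)$ that you did prove is the correct form of (iv). Equality is guaranteed only in special cases, such as when $\theta$ is $G$-invariant.
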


\begin{proof} (i) Let $\theta \in \Irr(N)$ and let $\widehat{\theta}$ be the minimal $G$-invariant character of $N$ associated to $\theta$. If $\chi \in \irr{G \mid \theta}$, by Clifford's theorem, we obtain that $\ker{\chi} \cap N = \ker{{\widehat{\theta}}}$. The other equality follows trivially since $\widehat{\theta}$ is a character of $N$ and $\{ \theta^g \mid g \in G \}$ is the set of its irreducible constituents.\\

(ii) By Clifford's theorem, we obtain trivially that $\zent \chi \cap N = \zent {\widehat{\theta}}$. Moreover, we have $\widehat{\theta}=\theta_1+\cdots+\theta_{t_\theta}$ where $\theta_1=\theta$ and $\{\theta_1,\hdots,\theta_{t_\theta}\}$ are the distinct $G$-conjugates of $\theta$. Let $x \in \zent {\widehat{\theta}}$, and let $\mathcal{R}$ be a representation of $N$ affording $\widehat{\theta}$. We have $\mathcal{R}(x)=\varepsilon I_{\widehat{\theta}(1)}$ for some $\varepsilon \in \mathbb{C}$, where $\mathcal{R}(x)$ is similar to the block diagonal matrix with blocks $\mathcal{R}_1(x),\hdots,\mathcal{R}_{t_\theta}(x)$, where $\mathcal{R}_i$ is a representation of $N$ affording $\theta_i$ for all $i \in \{1,\hdots,t_\theta\}$. Since only $\varepsilon I_{\widehat{\theta}(1)}$ is similar to itself, we deduce that $\mathcal{R}_i(x)=\varepsilon I_{\theta(1)}$, hence $x \in \zent {\theta_i}$ for all $i$. Therefore, $\zent{\widehat{\theta}} \subseteq \bigcap_{i=1}^{t_\theta} \zent{\theta_i}= \bigcap_{g \in G} \zent {\theta}^g$.
\\

(iii) Trivially, $\zent G \cap N \subseteq \zent {\widehat{\theta}}$, for every $\widehat{\theta} \in \text{Min}_G(N)$. If $g \in \bigcap_{\widehat{\theta} \in \text{Min}_G(N)} \zent {\widehat{\theta}}$, then $g \,\ker {\widehat{\theta}} \in \zent { G/\ker {\widehat{\theta}}}$, and thus 
$$[g,x] \in \ker {\widehat{\theta}} \subseteq \ker \theta,$$
for every $x \in G$ and $\theta \in \irr N$. Then $[g,x]=1$ and $g \in \zent G \cap N$.

Suppose that $\widehat{\theta}$ is a faithful character of $N$. Consider $\chi \in \Irr(G | \theta)$. Since $\chi_N=e \widehat{\theta}$ for some integer $e$, we have that $\zent \chi \cap N = \zent{\widehat{\theta}}$. In particular, $\zent {\widehat{\theta}} \subseteq \zent \chi$. Using that $\zent \chi / \ker \chi = \zent{G/\ker \chi}$, we have that if $x \in \zent {\widehat{\theta}}$ then $x \,\ker \chi \in \zent{G/\ker \chi}$. Therefore, $[x,G] \in \ker \chi$. But $[x,G] \in [N,G] \subseteq N$, and so $[x,G] \in \ker \chi \cap N = \ker {\widehat{\theta}} = 1.$ That is, $x \in \zent G$. It follows that $\zent{\widehat{\theta}} = \zent G \cap N$. \\

(iv)  If $g \in G$, then
$$\det {\theta^g} = \det{\theta}^g,$$
and so the determinantal orders $o(\theta^g)$ and $o(\theta)$ are equal.
Since
$$\det {\widehat{\theta}} = \prod_{g \in G} \det{\theta^{g}}$$
where all $\det{\theta^{g}}$ have the same order, we conclude $o(\widehat{\theta})=o(\theta)$.
\end{proof}
 
\begin{remark} Under the conditions of Lemma \ref{centro}(ii) we do not necessarily have $\zent{\widehat{\theta}}= \bigcap_{g \in G} \zent{\theta}^g$. See Example \ref{ej 1} in Section 4 for that matter.
 \end{remark}

In the following result, we prove a light generalization of Burnside's theorem (see Theorem (3.8) of \cite{Is}) for minimal $G$-invariant characters.

\begin{proposition} \label{burnside}
    Let $N$ be a normal subgroup of $G$ and let $\widehat{\theta} \in \Min G N$. Let $K=g^G$ be the $G$-conjugacy class of an element $g \in N$. If $(\widehat{\theta}(1), |K|) = 1$, then either $g \in \zent{\widehat{\theta}}$ or $\widehat{\theta}(g) = 0$.
\end{proposition}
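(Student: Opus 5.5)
Following the classical proof of Burnside's theorem (Theorem (3.8) of \cite{Is}), we pass to $G$ by means of an irreducible character lying over $\theta$. Choose $\chi \in \irr{G \mid \theta}$. By Clifford's theorem $\chi_N = e\,\widehat{\theta}$ for some positive integer $e$, so $\chi(g) = e\,\widehat{\theta}(g)$ and $\chi(1) = e\,\widehat{\theta}(1)$. Consequently
$$\frac{\chi(g)}{\chi(1)} = \frac{\widehat{\theta}(g)}{\widehat{\theta}(1)}.$$
This identity transfers the question to $\chi$, which is a character of $G$.

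Let $\omega_\chi$ be the central character of $\chi$. Its value $\omega_\chi(\widehat{K}) = |K|\chi(g)/\chi(1) = |K|\,\widehat{\theta}(g)/\widehat{\theta}(1)$ is an algebraic integer, where $K$ is the $G$-class of $g$. Also, $\widehat{\theta}(g)$ is a sum of roots of unity, hence an algebraic integer. Since $(\widehat{\theta}(1),|K|)=1$, choose integers $a,b$ with $a|K| + b\,\widehat{\theta}(1) = 1$. Then
$$\alpha := \frac{\widehat{\theta}(g)}{\widehat{\theta}(1)} = a\,\frac{|K|\widehat{\theta}(g)}{\widehat{\theta}(1)} + b\,\widehat{\theta}(g)$$
is an algebraic integer. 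It is here that the coprimality is used, and it is used with $\widehat{\theta}(1)$ instead of $\chi(1)$. This is what the detour through $\chi$ makes possible, since $\widehat{\theta}(1)$ and $\chi(1)$ give the same ratio.

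Next we apply the standard lemma (Lemma (3.5) of \cite{Is}). Since $\widehat{\theta}(g)$ is a sum of $\widehat{\theta}(1)$ roots of unity and $\alpha = \widehat{\theta}(g)/\widehat{\theta}(1)$ is an algebraic integer, either $\widehat{\theta}(g)=0$ or $|\widehat{\theta}(g)| = \widehat{\theta}(1)$. The argument is the usual one: all Galois conjugates of $\alpha$ have absolute value at most $1$, so the norm of $\alpha$ is a rational integer of absolute value at most $1$.

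Finally, suppose $|\widehat{\theta}(g)|=\widehat{\theta}(1)$. Let $\mathcal{R}$ be a representation affording $\widehat{\theta}$. Then $\mathcal{R}(g)$ is diagonalizable with eigenvalues that are roots of unity, and their sum has modulus equal to their number. Hence all the eigenvalues coincide and $\mathcal{R}(g)$ is scalar, that is, $g \in \zent{\widehat{\theta}}$. This is the same reasoning used in Lemma \ref{centro}(ii), and it is Lemma (2.27) of \cite{Is} applied to the (possibly reducible) character $\widehat{\theta}$.

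None of these steps is a real obstacle. The only point requiring care is the integrality of $\alpha$, which must combine the central character of $\chi$ with the coprimality to $\widehat{\theta}(1)$ rather than to $\chi(1)$. The resulting argument works directly without needing to relate $|K|$ to class sizes inside $N$.
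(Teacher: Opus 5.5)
Your proof is correct and follows essentially the same route as the paper. You pass to $\chi\in\irr{G\mid\theta}$ to get $\chi(g)/\chi(1)=\widehat{\theta}(g)/\widehat{\theta}(1)$, combine integrality of the central character value with B\'ezout on $\widehat{\theta}(1)$ and $|K|$, and finish with the norm argument. The only cosmetic difference is that the paper runs the norm step on $\chi$ and concludes from $g\notin\zent{\chi}$, implicitly using $\zent{\chi}\cap N=\zent{\widehat{\theta}}$ from Lemma \ref{centro}(ii), whereas you run it directly on the reducible character $\widehat{\theta}$, which is equally valid.
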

\begin{proof}
    
Let $\chi \in \irr{G \mid \theta}$. By Clifford's theorem, we have $\chi(n) = e \,\widehat{\theta}(n)$ for all $n \in N$,
where $e$ is a nonnegative integer. Then it follows that $$\frac{\widehat{\theta}(g)}{\widehat{\theta}(1)} = \frac{\chi(g)}{\chi(1)}.$$
 Now, by Bézout's identity, there exist integers $a, b\in \mathbb{Z}$ such that $a \, \widehat{\theta}(1) + b \,|K| = 1$. From here, we have
$$\frac{\chi(g) \, b|K|}{\chi(1)} =  \frac{\widehat{\theta}(g) \,b|K|}{\widehat{\theta}(1)} = \frac{\widehat{\theta}(g) (1 - a \,\widehat{\theta}(1))}{\widehat{\theta}(1)} = \frac{\widehat{\theta}(g)}{\widehat{\theta}(1)} - a\,\widehat{\theta}(g)$$
is an algebraic integer. Therefore, since $a\,\widehat{\theta}(g)$ is an algebraic integer, $\widehat{\theta}(g)/\widehat{\theta}(1) = \chi(g)/\chi(1)$ is also an algebraic integer.  

Let $k$ be the order of the element $g \in N$. Since
$\chi(g) = \varepsilon_1 + \cdots + \varepsilon_f,
$
where each $\varepsilon_i$ (for $i = 1, \ldots, f$) is a root of the polynomial $x^k - 1$, it follows that $\chi(g)/\chi(1)$ lies in the splitting field $E$ of the polynomial $x^k - 1$. Let $\mathcal{G} = \mathrm{Gal}(E \mid \mathbb{Q})$. For $\sigma \in \mathcal{G}$, we have
$\chi(g)^\sigma = \varepsilon_1^\sigma + \cdots + \varepsilon_f^\sigma$.

Assume that $g \not\in \zent \chi$. Then $|\chi(g)| < \chi(1)$ and also $|\chi(g)^\sigma| < \chi(1)$ for all $\sigma \in \mathcal{G}$. Hence
$$
\left| \frac{\chi(g)^\sigma}{\chi(1)} \right| < 1.
$$
Define
$$
\beta = \prod_{\sigma \in \mathcal{G}} \frac{\chi(g)^\sigma}{\chi(1)}.
$$
Then $\beta$ is an algebraic integer such that $\beta^\sigma = \beta$ for all $\sigma \in \mathcal{G}$, and thus $\beta$ is an integer. However, we have $|\beta| < 1$, so $\beta = 0$. Therefore, $\chi(g) = 0$ and $\widehat{\theta}(g) = 0$ as desired.

\end{proof} 

\begin{remark}\label{remark burnside} Regarding the above proposition, note that the hypothesis $(\widehat{\theta}(1),|K|)=1$ does not imply that $(\chi(1),|K|)=1$, for $\chi \in \irr{G \mid \theta}$. On the other hand, if $(\widehat{\theta}(1),|K|)=1$, then $(\theta(1),|g^N|)=1$, where $g^N$ is the $N$-conjugacy class of $g$, and it follows by Burnside's theorem that $g \in \zent \theta$ or $\theta(g)=0$. However, this still does not imply that $g \in \zent{\widehat{\theta}}$ or $\widehat{\theta}(g)=0$. This is seen in Example \ref{ej 1}.
\end{remark}

\begin{corollary}

Let $N$ be a minimal normal subgroup of $G$. Suppose that $K$ is the $G$-conjugacy class of a non-central element $g \in N$ such that $|K|$ is a power of a prime $p$. Then $p$ divides $|G:N|$.
\end{corollary}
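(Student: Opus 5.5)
We argue by contradiction: assume $p \nmid |G:N|$ and use Proposition \ref{burnside} applied to minimal $G$-invariant characters of $N$ whose degree is coprime to $p$. The key observation is that every $\widehat{\theta} \in \Min G N$ has degree $\theta(1)\,t_\theta$, where $t_\theta = |G:I_G(\theta)|$ and $\theta(1)$ divides $|N|$. So we need a supply of $\theta$ with $p \nmid \theta(1) t_\theta$.

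First we check that $N$ is not central enough to be trivial: since $g$ is non-central and $N$ is minimal normal, $N \cap \zent G = 1$. Indeed $N\cap\zent G$ is normal in $G$ and contained in $N$, and it is not all of $N$ because $g \notin \zent G$.

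The main step is to produce enough characters of $p'$-degree. Suppose $p\nmid|G:N|$. Consider the $p'$-part of the regular character of $N$, or more simply use column orthogonality for $G$ restricted to $N$. Write $\rho_N = \sum_{\widehat{\theta}} \theta(1)\widehat{\theta}$ (sum over $G$-orbits). Then
\[
0=\rho_N(g)=\sum_{\widehat\theta\in\Min G N}\theta(1)\widehat\theta(g).
\]
For each $\widehat\theta$ with $p\nmid\widehat\theta(1)$, Proposition \ref{burnside} gives $\widehat\theta(g)=0$ or $g\in\zent{\widehat\theta}$. If $g\in\zent{\widehat\theta}$ then $\widehat\theta(g)=\varepsilon\widehat\theta(1)$ for a root of unity $\varepsilon$. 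We want to work modulo $p$ in a suitable ring. Every $\widehat\theta$ with $p\mid\widehat\theta(1)$ contributes a term divisible by $p$ in the ring of algebraic integers, since $\widehat\theta(g)\in\mathbb Z[\varepsilon]$ and $\widehat\theta(1)$ is divisible by $p$. However, the coefficient in front is $\theta(1)$, not $\widehat\theta(1)$. This is the delicate point, so it is better to use the regular character of $G$ restricted: $\sum_{\chi\in\irr G}\chi(1)\chi(g)=0$ with $\chi_N=e\widehat\theta$, giving $\chi(1)\chi(g)=e^2\widehat\theta(1)\widehat\theta(g)$. Now terms with $p\mid\widehat\theta(1)$ vanish mod $p$. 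The remaining terms are either zero or equal to $e^2\varepsilon_\theta\widehat\theta(1)^2$, with $g\in \zent{\widehat\theta}$.

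Next we show that for $p'$-degree $\widehat\theta$, the case $g\in\zent{\widehat\theta}$ forces a contradiction unless $\widehat\theta$ is trivial-like. Since $N$ is minimal normal, $\ker\widehat\theta$ is $1$ or $N$. If $\widehat\theta=1_N$, then $g\in\zent{\widehat\theta}$ with $\varepsilon=1$. If $\widehat\theta$ is faithful, Lemma \ref{centro}(iii) gives $\zent{\widehat\theta}=\zent G\cap N=1$, so $g\notin\zent{\widehat\theta}$ and $\widehat\theta(g)=0$. Hence the sum reduces to
\[
0=\sum_{\chi\in\irr{G/N}}\chi(1)^2+(\text{multiple of }p)=|G:N|+p\alpha,
\]
with $\alpha$ an algebraic integer. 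Then $|G:N|/p$ is an algebraic integer and rational, hence an integer, so $p\mid|G:N|$. This contradicts the assumption, and in fact proves the claim directly without needing contradiction.

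The main obstacle is the bookkeeping step of choosing the right orthogonality relation so that the non-$p'$ terms are genuinely divisible by $p$. Using $\chi(1)\chi(g)$ over $\irr G$, rather than the regular character of $N$, resolves this.
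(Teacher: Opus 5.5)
Your argument is correct and is essentially the paper's proof. For $\theta \neq 1_N$, minimality of $N$ forces $\widehat{\theta}$ to be faithful, so Lemma~\ref{centro}(iii) and Proposition~\ref{burnside} give $\widehat{\theta}(g)=0$ whenever $p \nmid \widehat{\theta}(1)$; the second orthogonality relation over $\irr G$ then yields $0=|G:N|+p\alpha$ with $\alpha$ an algebraic integer, hence $p \mid |G:N|$. The opening contradiction framing and the detour through the regular character of $N$ are never used and can be dropped.
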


\begin{proof}
Let $\theta \in \irr N \setminus \{1_N\}$. Since $\ker{\widehat{\theta}}$ is a normal subgroup of $G$ contained in $N$, there are only two possibilities: either $\ker{\widehat{\theta}} = 1$ or $\ker{\widehat{\theta}} = N$. The latter cannot occur because $\theta$ is not the trivial character of $N$. Therefore, $\ker{\widehat{\theta}} = 1$, and by Lemma \ref{centro} we have
$$
\zent{\widehat{\theta}} = \zent{G} \cap N.
$$

Now observe that if $p$ does not divide $\widehat{\theta}(1)$,  by the previous proposition, it follows that either $g \in \zent{\widehat{\theta}}$ or $\widehat{\theta}(g) = 0$. The first option is impossible, since $g \notin \zent{G}$ by hypothesis. Hence
$$
\widehat{\theta}(g) = 0.
$$
Let
$$
\Delta = \{ \chi \in \irr G \mid N \subseteq \ker\chi \}.
$$
Note that for every $\chi \in \irr G \setminus \Delta$, we have either $\chi(g) = 0$ or that $p$ divides $\chi(1)$.

By the second orthogonality relation, we obtain
$$
0 = \sum_{\chi \in \irr G} \chi(1)\chi(g)
  = \sum_{\chi \in \Delta} \chi(1)\chi(g)
  + \sum_{\chi \in \irr G  \setminus \Delta} \chi(1)\chi(g)= $$
  $$\sum_{\chi \in \Delta} \chi(1)^2
  + p \sum_{\chi \in \irr G  \setminus \Delta} \dfrac{\chi(1)}{p}\chi(g).
$$

Since the first sum equals $|G:N|$ and the second sum (which is denoted by $a$) is an algebraic integer, we may write
$$
0 = |G:N| + p a,
$$
and thus $a=-|G:N|/p \in \mathbb{Q}$ is an integer. Therefore, $p$ divides $|G:N|$, as desired. 
\end{proof}

Next, we list some preliminaries that will be needed for the proofs of the main results.

\begin{lemma}[\cite{AFJP}, Proposition 3.2] \label{[G,N] ker(theta) LinGN} Let $N$ be a normal subgroup of a group $G$ and let $\theta \in \Irr(N)$. Then $[N,G] \subseteq \ker \theta$ if and only if $\theta \in \lin G N$. In particular, $|\,N:[N,G]\,|=|\,\lin G N\,|$.
\end{lemma}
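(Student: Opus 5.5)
The plan is to prove the two implications directly from the definitions, using that a linear character is a homomorphism $N\to\mathbb{C}^\times$. The counting statement then follows by identifying $\lin G N$ with the irreducible characters of an abelian quotient. Throughout, write $[n,g]=n^{-1}n^g$ with $n^g=g^{-1}ng$, and $\theta^g(n)=\theta(gng^{-1})$. The argument does not depend on these conventions.

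First suppose $[N,G]\subseteq\ker\theta$.
\begin{itemize}
\item Since $N'=[N,N]\subseteq[N,G]$, the character $\theta$ is trivial on $N'$. Thus $\theta$ is a character of the abelian group $N/N'$, so $\theta(1)=1$ and $\theta$ is a homomorphism.
\item For $g\in G$ and $n\in N$ we have $gng^{-1}=n\,[n,g^{-1}]$ with $[n,g^{-1}]\in[N,G]$.
\item Multiplicativity then gives $\theta^g(n)=\theta(n)\theta([n,g^{-1}])=\theta(n)$. Hence $I_G(\theta)=G$, so $\widehat{\theta}=\theta$, and therefore $\theta\in\lin G N$.
\end{itemize}

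Conversely, let $\theta\in\lin G N$, so $\theta$ is a $G$-invariant homomorphism $N\to\mathbb{C}^\times$.
\begin{itemize}
\item For $n\in N$ and $g\in G$, $\theta([n,g])=\theta(n)^{-1}\theta(g^{-1}ng)=\theta(n)^{-1}\theta^{g^{-1}}(n)=1$, using invariance.
\item Since $\theta$ is linear, $\ker\theta$ is a subgroup of $N$. It contains every generator $[n,g]$ of $[N,G]$, so $[N,G]\subseteq\ker\theta$.
\end{itemize}

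For the cardinality, the equivalence shows that $\lin G N$ is exactly the set of $\theta\in\irr N$ with $[N,G]\subseteq\ker\theta$. These are in bijection with $\irr{N/[N,G]}$ by inflation. Since $N'\subseteq[N,G]$, the quotient $N/[N,G]$ is abelian, so it has exactly $|N:[N,G]|$ irreducible characters, all linear. This gives $|N:[N,G]|=|\lin G N|$.

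There is no real obstacle here. The only point needing care is that $\ker\theta$ is a subgroup closed under the generating commutators. This relies on $\theta$ being linear, which is why linearity must be obtained first, from $N'\subseteq[N,G]$, in the forward direction.
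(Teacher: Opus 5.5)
Your proof is correct. The paper does not prove this lemma; it quotes it from Proposition 3.2 of AFJP. Your argument is the standard direct verification: $N'\subseteq[N,G]$ forces linearity, $gng^{-1}=n[n,g^{-1}]$ gives $G$-invariance, $\theta([n,g])=\theta(n)^{-1}\theta^{g^{-1}}(n)=1$ gives the converse, and inflation from the abelian quotient $N/[N,G]$ gives the count.

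One small correction to your closing remark. $\ker\theta$ is a normal subgroup of $N$ for every character, not only for linear ones. Likewise, in the forward direction $G$-invariance holds without linearity, since a representation sends elements of $\ker\theta$ to the identity. Linearity is needed there only because membership in $\lin G N$ requires $\theta(1)=1$.
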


\begin{lemma} \label{induccion} Let $N \unlhd G$.
\begin{itemize}
    \item[(i)] If $\Mcd G N = \{1\}$, then $N$ is central in $G$.
    \item[(ii)] Let $K \unlhd G$ such that $K \leq N$. Then
    $$\Mcd {G/K} {N/K} \subseteq \Mcd G N.$$
    As a consequence,
    $$\Lcd {G/K} {N/K} \subseteq \Lcd G N.$$
    \item[(iii)] Let $M \cdot \unlhd \ G$. Then
    $$\Mcd {G/M} {NM/M} \subseteq \Mcd G N.$$
    As a consequence,
    $$\Lcd {G/M} {NM/M} \subseteq \Lcd G N.$$
\end{itemize}
\end{lemma}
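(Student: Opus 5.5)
We treat the three items separately; each follows from Clifford theory and the definitions of $\Min G N$ and $\Lcd G N$.

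For (i), suppose every $\widehat{\theta}$ has degree $1$. Then each $\theta \in \irr N$ is linear and $G$-invariant, since $\widehat{\theta}(1)=t_\theta\theta(1)=1$ forces $t_\theta=1$ and $\theta(1)=1$. Hence $\lin G N = \irr N$. By Lemma \ref{[G,N] ker(theta) LinGN}, $[N,G]\subseteq \ker\theta$ for every $\theta\in\irr N$. Since $\bigcap_{\theta\in\irr N}\ker\theta=1$, we get $[N,G]=1$, so $N\le \zent G$. Alternatively, we could use Lemma \ref{centro}(iii): each $\widehat{\theta}$ is linear, so $\zent{\widehat{\theta}}=N$, and then $N=\bigcap \zent{\widehat{\theta}}=\zent G\cap N$.

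For (ii), we identify $\irr{N/K}$ with the characters $\theta\in\irr N$ having $K\subseteq\ker\theta$, via inflation. The action of $G/K$ on $\irr{N/K}$ corresponds to the action of $G$ on these $\theta$. Since $K\unlhd G$, all $G$-conjugates of such a $\theta$ also contain $K$ in their kernels. So the $G/K$-orbit of $\bar\theta$ inflates to exactly the $G$-orbit of $\theta$, and $\widehat{\bar\theta}$ inflates to $\widehat{\theta}$ with the same degree. Moreover $\bar\theta(1)=\theta(1)$, which gives both inclusions, for $\Mcd{}{}$ and for $\Lcd{}{}$.

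For (iii), the quotient $NM/M\cong N/(N\cap M)$ as $G$-modules, in the sense that $g\in G$ acts by conjugation compatibly. Put $K=N\cap M\unlhd G$. An irreducible character $\varphi$ of $NM/M$ restricts to $N$ as a character with kernel containing $N\cap M$, and this restriction is irreducible because $NM=N M$ and $M$ lies in the kernel. The action of $G/M$ on $\irr{NM/M}$ corresponds to the action of $G$ on the corresponding $\theta\in\irr N$, since $M$ acts trivially on both sides. Indeed, $M\subseteq\ker$ of the inflated character, so conjugation by $M$ fixes it.

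Hence orbit sizes and degrees match: for each such $\theta$, the $G/M$-orbit of $\varphi$ has the same length as the $G$-orbit of $\theta$. Therefore $\Mcd{G/M}{NM/M}\subseteq\Mcd G N$, and likewise for $\Lcd{}{}$. The only point needing care is that the stabilizers correspond: $I_{G/M}(\varphi)=I_G(\theta)M/M=I_G(\theta)/M$, because $M\le I_G(\theta)$. We would verify this by noting that $g$ fixes $\varphi$ if and only if $g$ fixes its restriction $\theta$. That in turn holds because restriction from $NM$ to $N$ is a $G$-equivariant bijection between characters of $NM$ containing $M$ in the kernel and characters of $N$ containing $N\cap M$ in the kernel. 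This bijection is the main (minor) obstacle, and it follows from the isomorphism $NM/M\cong N/(N\cap M)$. Finally, the $\Lcd{}{}$ statement follows in each case because $\theta(1)$ is also preserved.
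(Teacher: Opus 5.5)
Your proof is correct. Parts (i) and (ii) follow the paper's argument: for (i) the paper also uses Lemma \ref{[G,N] ker(theta) LinGN} and the triviality of $\bigcap_{\theta}\ker\theta$, and for (ii) it checks $I_G(\theta)/K=I_{G/K}(\theta)$ after inflation, as you do.

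Part (iii) is where your route differs. The paper uses the minimality of $M$ to split into two cases.
\begin{itemize}
\item If $M\le N$, the claim is just (ii).
\item Otherwise $M\cap N=1$, so $NM=N\times M$. The characters of $NM/M$ are then the $\theta\times 1_M$, and $I_G(\theta)=I_G(\theta\times 1_M)$. This even gives $\Mcd G N=\Mcd{G/M}{NM/M}$.
\end{itemize}

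You instead use the $G$-equivariant isomorphism $NM/M\cong N/(N\cap M)$, realized as the restriction bijection between $\irr{NM/M}$ and the irreducible characters of $N$ containing $N\cap M$ in their kernel. In effect this shows $\Mcd{G/M}{NM/M}=\Mcd{G/(N\cap M)}{N/(N\cap M)}$, which lies in $\Mcd G N$ by (ii).

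Your approach has two advantages. It avoids the case split, and it shows that minimality of $M$ is not needed: the inclusion (and its $\Lcd{}{}$ analogue) holds for every $M\unlhd G$. When $N\cap M=1$ you recover the paper's equality. The paper's version is more concrete, but it depends on the direct-product structure that minimality forces.

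The step you single out, $M\le I_G(\theta)$, is immediate. It follows from your equivariance argument, since $M\le NM$ fixes every class function of $NM$. It also follows directly from $[M,N]\le M\cap N\le\ker\theta$.
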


\begin{proof} (i) Since $\lin G N=\Irr(N)$, by the above lemma, we have
$$[N,G] \subseteq \bigcap_{\theta \in \Irr(N)}\ker \theta = 1,$$
thus $N \subseteq \zent G$.

(ii) As usual, we denote $\overline{G}=G/K$ and $\overline{N}=N/K$. Let $\widehat{\theta} \in \Min {\overline{G}} {\overline{N}}$ with $\theta \in \irr{\overline{N}}$. By inflation, one may consider $\theta$ as a character in $\irr N$ such that $K \subseteq \ker \theta \subseteq N \subseteq I_G(\theta) \subseteq G$. We use the same notation $\theta$ for both cases. 

Now, we prove that $I_G(\theta)/K = I_{\overline{G}}(\theta)$. We have that $\theta(n)=\theta(\overline{n})$, for every $n \in N$. Trivially, $I_G(\theta)/K \subseteq I_{\overline{G}}(\theta)$. Reciprocally, if $\overline{x} \in I_{\overline{G}}(\theta)$ and $\overline{n} \in \overline{N}$, then
$$\theta(n)=\theta(\overline{n})=\theta^{\overline{x}}(\overline{n})=\theta(\overline{xnx^{-1}})=\theta^x(n),$$
for every $n \in N$, $x \in I_G(\theta)$, and $I_G(\theta)/K = I_{\overline{G}}(\theta)$. Thus
$$\widehat{\theta}(\overline{1})=\left|\overline{G}:I_{\overline{G}}(\theta)\right|\theta(\overline{1})=|G: I_G(\theta)|\,\theta(1) \in \Mcd G N,$$
and similarly
$$\widehat{\theta}(\overline{1}) \, \theta(\overline{1})=\left|\overline{G}:I_{\overline{G}}(\theta)\right|\theta^2(\overline{1})=|G: I_G(\theta)|\,\theta^2(1) \in \Lcd G N.$$

(iii)  Using (ii), we may assume that $M \not \subseteq N$. By minimality of $M$, we have that $M \cap N = 1$ and $MN= M \times N$. Consider $\theta \in \Irr(N)$. Then $I_G(\theta) = I_G(\theta \times 1_M)$. By the same reasoning as above, by identifying $\theta \times 1_M$ with an irreducible character of $NM/M$, we have that $I_{G/M}(\theta \times 1_M)=I_G (\theta \times 1_M)M/M = I_G(\theta \times 1_M)/M$. Thus
\begin{align*}
    \Mcd G N & = \{ |G:I_G(\theta \times 1_M)|\,(\theta \times 1_M)(1),  \ \theta \in \Irr(N) \} \\
    & = \{ |G/M:I_{G/M}(\theta \times 1_M)|\,(\theta \times 1_M)(1) , \ \theta \times 1_M \in \Irr(NM/M) \}
    \\ & = \Mcd{G/M} {NM/M}.
 \end{align*}

 Trivially, since $\theta(1)=(\theta\times 1_M)(1)$, we also have that
$$\Lcd G N = \Lcd {G/M}{NM/M}.$$
\end{proof}

The well-known Thompson's theorem is generalized to minimal $G$-invariant characters of $N$ as follows.

\begin{theorem}[\cite{AFJP}, Theorem 3.10] \label{Thompson}
Let $N$ be a normal subgroup of the group $G$ and let $p$ be a prime integer. Suppose $p$ divides $\widehat{\theta}(1)$ for every $\widehat{\theta}\in \Min G N \setminus \lin G N$. Then $N$ has a normal $p$-complement.
\end{theorem}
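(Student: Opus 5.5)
The statement is Theorem 3.10 of \cite{AFJP}, which the paper cites rather than proves. I would follow Thompson's original argument, adapted to $G$-orbits, and work by induction on $|N|$ (for all pairs $N \unlhd G$). The basic tool is a vanishing statement. If $p \mid \widehat{\theta}(1)$ for every $\widehat{\theta}\in \Min G N\setminus\lin G N$, then every nonlinear-or-noninvariant $\widehat{\theta}$ vanishes on certain $p$-regular elements. The intended inequality is the one from Thompson's proof, where Galois conjugates of $\chi(g)/\chi(1)$ are multiplied together to force vanishing, as in Proposition~\ref{burnside}.

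The key steps, in order, are as follows.

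\begin{enumerate}
\item Let $M=[N,G]$. By Lemma~\ref{[G,N] ker(theta) LinGN}, the characters $\theta$ with $\widehat{\theta}\in\lin G N$ are exactly the members of $\irr{N/M}$, and $N/M$ is abelian. If $M$ has a normal $p$-complement, that complement is characteristic in $M$ and so normal in $G$. The induction on $|N|$ can then be used via Lemma~\ref{induccion}(ii), after dividing out by $\oh{p'}{M}$.
\item Take $X=\oh{p}{M}\cdots$. Concretely, I aim to show $\Oh{p}{N} \cap \ldots$, and in the end that $N$ has a normal $p$-complement. This is equivalent to every $p'$-element lying in a normal $p'$-subgroup. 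So I will consider the regular character $\rho_N$, written as a sum over $\Min G N$: $\rho_N=\sum_{\widehat{\theta}}\frac{\theta(1)}{1}\widehat{\theta}$.
\item Decompose $\rho_N = \sum_{\lambda\in\lin G N}\lambda + \sum_{\text{rest}} \theta(1)\widehat{\theta}$. The first sum equals $|N:M|\,1_M$ induced-style, i.e.\ it is supported on $M$. Every term of the second sum has degree divisible by $p$. Evaluating at an element $x\in N$, the second sum gives $p\cdot(\text{algebraic integer})$.
\item Let $x\in M$ be $p$-regular with $x^G\subseteq M$. Use $\rho_N(x)=0$ for $x\neq1$ to get $|N:M|\equiv 0 \pmod p$ on the relevant values. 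Then build a $G$-invariant class function $\psi=\sum_{\text{rest}}\theta(1)\widehat{\theta}/p$. Following Thompson, apply a Galois automorphism fixing $p$-power roots of unity, so that $\psi$ is constant on the $p$-sections.
\item Conclude that the set $\{x\in N : x_{p}=1,\ \ldots\}$ of $p$-regular elements, i.e.\ the set of $p'$-elements, forms a normal subgroup, using a Frobenius-type counting argument. This gives the normal $p$-complement.
\end{enumerate}

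The main obstacle is step 4. In Thompson's proof one uses the Galois automorphism $\sigma$ fixing $p'$-roots and the fact that the values $\theta(x)$ transform by permuting $\irr N$. Here I must check that $\sigma$ permutes $\Min G N$ while preserving $\widehat{\theta}(1)$ and preserving the property of lying in $\lin G N$. This holds because $\sigma$ commutes with the $G$-action. Alternatively, I would first try to reduce directly to Thompson's theorem for $N$ itself. The characters $\theta$ with $p\nmid\theta(1)$ and $\theta$ nonlinear satisfy $p\mid |G:I_G(\theta)|$. Passing to a Sylow-type argument, one could apply Thompson to $N$ inside $I_G(\theta)$, but that fallback seems weaker.
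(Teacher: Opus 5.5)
Since the paper only quotes this result from \cite{AFJP}, your sketch has to stand on its own, and it has a genuine gap at its core, in Steps 3--4. Thompson's argument works because $\rho_G-\sum_{\lambda}\lambda=p\psi$ with $\psi$ an honest character, which holds because $p$ divides \emph{every} nonlinear degree. Here the hypothesis only says that $p$ divides $\widehat\theta(1)=|G:I_G(\theta)|\,\theta(1)$, and $p$ may divide the orbit length while $p\nmid\theta(1)$. In that case $\theta(1)\widehat\theta/p$ has non-integral multiplicities, so your $\psi$ is not even a generalized character. The values of $\sum_{\widehat\theta\notin\lin GN}\theta(1)\widehat\theta$ at $x\neq 1$ then need not be $p$ times algebraic integers. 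Concretely, take $G=A_4$, $N=V_4$, $p=3$. The hypothesis holds, $\lin GN=\{1_N\}$, $[N,G]=N$, and $\widehat\theta=\lambda_1+\lambda_2+\lambda_3$ equals $-1$ at every nonidentity element. So the divisibility $p\mid |N:[N,G]|$ you aim for in Step 4 is false, since $|N:[N,G]|=1$. The remaining steps do not repair this. Steps 2 and 5 are placeholders: an unfinished $\Oh{p}{N}\cap\ldots$ and an unspecified counting argument. A Galois automorphism does not make a class function constant on $p$-sections. In Step 1 you never check that $([N,G],G)$ inherits the hypothesis, which your induction needs.

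The missing idea is a reduction to the classical theorem. It is close to your fallback but uses the right subgroup: take $P\in\syl pG$ and $H=NP$. Let $\chi\in\irr H$ with $p\nmid\chi(1)$, and let $\theta$ be a constituent of $\chi_N$. Since $\chi(1)/\theta(1)$ divides the $p$-power $|H:N|$, we get $\chi_N=\theta$ with $\theta$ $P$-invariant and $p\nmid\theta(1)$. Hence $NP\le I_G(\theta)$, so $p\nmid\widehat\theta(1)$, and the hypothesis gives $\widehat\theta\in\lin GN$. Therefore $\chi(1)=\theta(1)=1$, so every nonlinear irreducible character of $H$ has degree divisible by $p$. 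Thompson's theorem then gives a normal $p$-complement $K$ of $H$. Since $H/N$ is a $p$-group, $K\le N$, and $K$ is a normal $p$-complement of $N$. Applying Thompson to $N$ itself is not available. For instance, with $G=C_7\rtimes C_6$, $N=C_7\rtimes C_3$ and $p=2$, the hypothesis holds but $N$ has irreducible characters of degree $3$. Adjoining $P$ is exactly what converts orbit lengths divisible by $p$ into degrees divisible by $p$.
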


\bigskip

For a normal subgroup $N$ of $G$, we may construct the following descending series:
$$N=\Gamma^1_G(N) \trianglerighteq \Gamma^2_G(N)=[\Gamma^1_G(N),G] \trianglerighteq \Gamma^3_G(N)=[\Gamma^2_G(N),G] \trianglerighteq \cdots \ ,$$
where $\Gamma^{i+1}_G(N)=[\Gamma^i_G(N),G]$ for any integer $i\geq 1$. This was defined in \cite{AFJP} as the \textit{lower central $G$-series of $N$}. As stated in \cite{AFJP}, $N$ is a hypercentral subgroup of $G$ (that is, is contained in the hypercenter of $G$) if and only if there exists some integer $r \geq 1$ such that $\Gamma^{r+1}_G(N)=1$. In that case, we say that the least such integer $r$ is the \textit{hypercentral $G$-length of $N$}.

\medskip

The following concept was also introduced in \cite{AFJP}: a normal subgroup $N$ of a group $G$ is a \textit{$G$-invariant $nMI$-subgroup} if, for every $\theta \in \irr{N}$, there exist $H_{\theta} \unlhd G$ and $\lambda_{\theta} \in \text{Lin}_G(H_{\theta})$ such that $\irr{G \mid \theta}=\irr{G \mid \lambda_{\theta} }$. We say that $(H_{\theta}, \lambda_{\theta})$ is a \textit{linear $G$-invariant character pair with respect to $\theta$}.

A variation of Taketa's theorem for $G$-invariant $nMI$-subgroups was published in \cite{AFJP}. We note that the superscript of $\Gamma_G^{i}(N)$ in the initial statement has been corrected to $\Gamma_G^{i+1}(N)$, according to the later version available on Arxiv (see \cite{AFJP2}).
 
 \begin{theorem}[\cite{AFJP}, Theorem 3.19]\label{nMI}
 Let $N$ be a $G$-invariant $nMI$-subgroup of a group $G$ and let $1=f_1 < f_2 < \cdots < f_s$  the distinct elements in $\textnormal{Lcd}_G(N)$. Then
$$\Gamma^{i+1}_G(N) \subseteq \ker {\widehat{\theta_{i}}}$$
with $\theta_i(1)\widehat{\theta_i}(1) = f_i$ and $\theta_i \in \irr{N}$, for $i \in \{1, \ldots, s\}$. In particular, $N$ is a hypercentral subgroup of $G$ and $l_G(N) \leq s=|\Lcd{G}{N}|$.
\end{theorem}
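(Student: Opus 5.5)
The plan is to argue by induction on $i$, in the spirit of the classical proof of Taketa's theorem. Minimal $G$-invariant characters and the linear $G$-invariant character pairs will play the roles of irreducible characters and of the linear characters from which $M$-characters are induced.

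\textbf{Base case and the final claim.} For $i=1$ we have $f_1=1$, so $\theta_1(1)=\widehat{\theta_1}(1)=1$. Hence $\theta_1\in\lin G N$, and Lemma \ref{[G,N] ker(theta) LinGN} gives $\Gamma^2_G(N)=[N,G]\subseteq\ker{\theta_1}=\ker{\widehat{\theta_1}}$. Once the inclusion is known for every $i$, the final statement follows directly. Every $\theta\in\irr N$ has $\theta(1)\widehat\theta(1)=f_i$ for some $i\le s$, and $\Gamma^{s+1}_G(N)\subseteq\Gamma^{i+1}_G(N)$. Therefore $\Gamma^{s+1}_G(N)$ lies in $\bigcap_{\theta}\ker{\widehat\theta}=\bigcap_\theta\ker\theta=1$, using Lemma \ref{centro}(i). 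So $N$ is hypercentral with $l_G(N)\le s$.

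\textbf{Reduction of the inductive step to a centre statement.} Fix $\theta=\theta_i$ with leader degree $f_i$, and let $\chi\in\irr{G\mid\theta}$. It suffices to prove
$$\Gamma^{i}_G(N)\subseteq \zent{\widehat\theta}.$$
Indeed, by Lemma \ref{centro}(ii) we have $\zent{\widehat\theta}=\zent\chi\cap N$, so $\Gamma^i_G(N)\ker\chi/\ker\chi$ is central in $G/\ker\chi$. Hence $[\Gamma^i_G(N),G]\subseteq\ker\chi\cap N=\ker{\widehat\theta}$ by Lemma \ref{centro}(i), which is $\Gamma^{i+1}_G(N)\subseteq\ker{\widehat\theta}$. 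To obtain the centre statement, I would consider the character $\widehat\theta\,\overline{\widehat\theta}$ of $N$. It is $G$-invariant, so its irreducible constituents come in full $G$-orbits. An element $x$ lies in $\zent{\widehat\theta}$ exactly when $x\in\ker{\widehat\theta\,\overline{\widehat\theta}}$. Thus it is enough to show that $\Gamma^i_G(N)\subseteq\ker{\widehat\psi}$ for every irreducible constituent $\psi$ of $\widehat\theta\,\overline{\widehat\theta}$. By the inductive hypothesis, this holds for every such $\psi$ whose leader degree $\psi(1)\widehat\psi(1)$ is at most $f_{i-1}$.

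\textbf{The nMI hypothesis and the main obstacle.} This is where the hypothesis enters, and I expect this step to be the hard one. Let $(H,\lambda)$ be a linear $G$-invariant character pair with respect to $\theta$. Then $\chi_H=\chi(1)\lambda$ for all $\chi\in\irr{G\mid\theta}$. Setting $D=H\cap N$, we get $\theta_D=\theta(1)\lambda_D$ with $\lambda_D$ linear and $G$-invariant, so $D\subseteq\zent{\widehat\theta}$. Consequently $D$ lies in the kernel of every constituent $\psi$ of $\widehat\theta\,\overline{\widehat\theta}$. The goal is to use the equality $\irr{G\mid\theta}=\irr{G\mid\lambda}$ to control these constituents. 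It should bound the ramification and orbit data of $\theta$ by those of a linear character living over $H$, so that every constituent $\psi$ other than those forced into $\zent{\widehat\theta}$ has leader degree strictly smaller than $f_i$. This is the analogue of the degree drop in Taketa's argument, where $\chi=\lambda^G$ and constituents of $(1_H)^G$ have smaller degree. My first attempt would be to pass to $\overline G=G/\ker\chi$, where $H$ becomes central and $\widehat\theta$ is faithful, and compare $|G:I_G(\psi)|\psi(1)$ with $|G:I_G(\theta)|\theta(1)$ via Clifford theory over $H$. If a direct degree comparison fails, the fallback is a minimal counterexample argument: choose $\theta$ of leader degree $f_i$ with $\Gamma^i_G(N)\not\subseteq\zent{\widehat\theta}$, and derive a contradiction using Lemma \ref{induccion} to pass to quotients $G/\ker{\widehat\psi}$.
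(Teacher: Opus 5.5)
The paper does not prove this theorem; it is quoted from \cite{AFJP}, so I am judging your argument on its own. Several parts are correct:
\begin{itemize}
\item the base case;
\item the deduction of hypercentrality with $l_G(N)\le s$;
\item the reduction of the inductive step to $\Gamma^i_G(N)\subseteq\zent{\widehat\theta}$ via Lemma~\ref{centro};
\item the identity $\zent{\widehat\theta}=\ker{\widehat\theta\,\overline{\widehat\theta}}$.
\end{itemize}
The gap is the step that carries the whole proof: that every irreducible constituent of $\widehat\theta\,\overline{\widehat\theta}$ has leader degree smaller than $f_i$. You only announce this, and neither suggested route supplies a mechanism for it. Comparing $|G:I_G(\psi)|\psi(1)$ with $|G:I_G(\theta)|\theta(1)$ is not the relevant comparison, and the minimal-counterexample fallback is not an argument. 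What is missing is the analogue of $\chi(1)=|G:H|$ in Taketa's proof, namely that $f_i$ is an index. This is why leader degrees, rather than minimal $G$-invariant degrees, appear in the statement.

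That fact comes from comparing induced characters. Since $\lambda$ is linear and $G$-invariant, Clifford's theorem gives $\chi_H=\chi(1)\lambda$ for $\chi\in\irr{G\mid\lambda}$. For $\chi\in\irr{G\mid\theta}$ it gives $\chi_N=e_\chi\widehat\theta$ with $\chi(1)=e_\chi t_\theta\theta(1)$. Frobenius reciprocity and $\irr{G\mid\theta}=\irr{G\mid\lambda}$ then give
$$\lambda^G=\sum_{\chi\in\irr{G\mid\lambda}}\chi(1)\chi=\sum_{\chi\in\irr{G\mid\theta}}\chi(1)\chi=\theta(1)\,(\widehat\theta)^G .$$
For $h\in H$ we have $\lambda^G(h)=|G:H|\lambda(h)\neq 0$, while $(\widehat\theta)^G$ vanishes off $N$. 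Hence $H\subseteq N$, so your $D$ is just $H$. Evaluating on $N$ shows three things:
\begin{itemize}
\item $\widehat\theta$ vanishes on $N\setminus H$;
\item $\widehat\theta_H=\widehat\theta(1)\lambda$;
\item $f_i=\theta(1)\widehat\theta(1)=|N:H|$.
\end{itemize}
Consequently $\widehat\theta\,\overline{\widehat\theta}=t_\theta\,(1_H)^N$, whose constituents are exactly the characters in $\irr{N/H}$. Group $\sum_{\psi\in\irr{N/H}}\psi(1)^2=|N:H|=f_i$ into $G$-orbits. This gives $\sum\psi(1)\widehat\psi(1)=f_i$ over the orbits, with the trivial orbit contributing $1$. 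So every such $\psi$ has leader degree $<f_i$, and no exceptional constituents occur. The inductive hypothesis then yields $\Gamma^i_G(N)\subseteq H$. Applying Lemma~\ref{[G,N] ker(theta) LinGN} to $H$ gives $\Gamma^{i+1}_G(N)\subseteq[H,G]\subseteq\ker{\lambda}=H\cap\ker{\widehat\theta}$, which completes the inductive step.
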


\section{Main results} \label{sección main results}

Recall that if $G$ is a group with two irreducible character degrees, then the derived subgroup of $G$ is abelian by Corollary (12.6) in \cite{Is}, and thus $G$ is solvable. We begin this section by proving analogously that if $N$ is a normal subgroup of a group $G$ having two minimal $G$-invariant character degrees, then $N$ is solvable. As we mentioned in the introduction, this is not a corollary of the first result, since $N$ could have three or more irreducible character degrees, but only two minimal $G$-invariant character degrees. The group $G=\texttt{SmallGroup}(128,2264)$ on GAP yields an example of this, as it is possible to choose $N \unlhd G$ of size $64$ with $\Mcd G N = \{1,4\}$ but $\cd N = \{1,2,4\}$.

We shall make use of the following key result, which depends on the classification of finite simple groups.

\begin{lemma}[\cite{BCLP}, Lemma 5] \label{lemma1} Let $N$ be a normal subgroup of $G$ so that $N = S_1 \times \cdots \times S_t$, where $S_i \cong S$, a nonabelian simple group. Let $A$ be the automorphism group of $S$. If $\sigma \in \Irr(S)$ extends to $A$, then $\sigma \times \cdots \times \sigma$ extends to $G$.
\end{lemma}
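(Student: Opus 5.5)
The plan is to reduce to the automorphism group of $N$: we build an extension of $\theta=\sigma\times\cdots\times\sigma$ on $\mathrm{Aut}(N)$ and then pull it back to $G$ along the conjugation action. The reduction is cheap; the real content is a standard tensor-product construction on a wreath product.

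\textbf{Step 1: the conjugation map.} Let $C=\cent G N$. Since $N$ is a direct product of nonabelian simple groups, $\zent N=1$, so $N\cap C=1$. Conjugation gives a homomorphism $\varphi\colon G\to \mathrm{Aut}(N)$ with kernel $C$. Its restriction to $N$ is the isomorphism $n\mapsto$ (conjugation by $n$) onto $\mathrm{Inn}(N)$.

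\textbf{Step 2: the structure of $\mathrm{Aut}(N)$.} We use the standard fact that, because the $S_i$ are nonabelian simple, the $S_i$ are exactly the minimal normal subgroups of $N$, so every automorphism of $N$ permutes them. Hence
$$\mathrm{Aut}(N)\cong A\wr \mathrm{Sym}(t)=A^t\rtimes \mathrm{Sym}(t).$$
Under this identification, $\mathrm{Inn}(N)=\mathrm{Inn}(S)^t$, and $\mathrm{Inn}(S)\cong S$ sits inside $A$ in the usual way.

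\textbf{Step 3: extending $\theta$ to the wreath product.} Let $\hat\sigma$ be an extension of $\sigma$ to $A$, afforded by a representation $\mathcal R$ on a space $V$. On $W=V^{\otimes t}$, let $(a_1,\dots,a_t)\in A^t$ act by $\mathcal R(a_1)\otimes\cdots\otimes\mathcal R(a_t)$, and let $\pi\in\mathrm{Sym}(t)$ act by permuting the tensor factors.

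We must check that these two actions combine into a representation of $A\wr\mathrm{Sym}(t)$. Permuting the factors conjugates $\mathcal R(a_1)\otimes\cdots\otimes\mathcal R(a_t)$ into $\mathcal R(a_{\pi^{-1}(1)})\otimes\cdots\otimes\mathcal R(a_{\pi^{-1}(t)})$. This is exactly the wreath product relation, so we get a representation of $A\wr\mathrm{Sym}(t)$, i.e.\ the tensor induction of $\hat\sigma$. Let $\eta$ be its character. Restricted to $S^t$ it affords $\sigma\times\cdots\times\sigma=\theta$, so $\eta$ is an extension of $\theta$ to $\mathrm{Aut}(N)$.

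\textbf{Step 4: pulling back to $G$.} Define $\chi=\eta\circ\varphi$, a character of $G$ with $C\subseteq\ker\chi$. For $n\in N$, $\varphi(n)$ is the inner automorphism induced by $n=(s_1,\dots,s_t)$, which corresponds to the element $(s_1,\dots,s_t)\in \mathrm{Inn}(S)^t\le A^t$. Therefore
$$\chi(n)=\eta(\varphi(n))=\theta(n).$$
So $\chi_N=\theta$, and since $\theta$ is irreducible, $\chi$ is irreducible. Thus $\chi$ extends $\sigma\times\cdots\times\sigma$ to $G$.

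\textbf{Main obstacle.} I expect the hard part to be Steps 2 and 3 together: identifying $\mathrm{Aut}(N)$ with $A\wr\mathrm{Sym}(t)$ so that $\mathrm{Inn}(N)$ matches $\mathrm{Inn}(S)^t$ compatibly with the given embedding $S\hookrightarrow A$, and verifying the tensor-induction relations. Step 4 is then routine bookkeeping.
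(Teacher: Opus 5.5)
Your proof is correct, and the $A$-invariance of $\sigma$ is what makes $\sigma\times\cdots\times\sigma$ independent of the chosen isomorphisms $S\cong S_i$. The paper gives no argument of its own here but quotes \cite{BCLP}; your route is the standard proof of that cited lemma as I recall it, though I have not checked the text of \cite{BCLP}. That route is to realize $G/\cent GN$ inside $\aut N\cong A\wr\mathrm{Sym}(t)$ with $\mathrm{Inn}(N)=\mathrm{Inn}(S)^t$, extend to the wreath product by tensor-inducing an extension of $\sigma$ to $A$, and inflate back to $G$.
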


\medskip

\begin{proof}[Proof of Theorem A] We first prove that we can assume there exists a unique minimal normal subgroup $M$ of $G$. Suppose instead that there exist $M_1 \neq M_2 \cdot \unlhd \ G$. By Lemma \ref{induccion} (iii), we have $$|\Mcd {G/M_i} {NM_i /M_i}|\leq 2$$
for $i \in \{1,2\}$. Then, by induction, both $NM_1/M_1$ and $NM_2/M_2$ are solvable.

Now, we may assume that $M_1 \subseteq N$ and $M_2 \subseteq N$. But $M_1 \cap M_2 =1$ by minimality of $M_1$ and $M_2$, and thus we may see $N$ as a subgroup of $N/M_1 \times N/M_2$. Since it is a direct product of solvable groups, $N$ is solvable.

Therefore, we consider $M$ the unique minimal normal subgroup of $G$. By uniqueness, we have $M \subseteq N$. Suppose $M$ is not abelian. We write
$$M= S_1 \times \cdots \times S_t,$$
where $S_1,\hdots,S_t$ are isomorphic copies of a nonabelian simple group $S$.

Assume $S$ is a simple group of Lie type with defining characteristic $p$, for some prime $p$. Let $\varphi$ be the Steinberg character of $S$. Then we know that $\varphi(1)$ is a power of $p$, say $p^a$, and, by Lemma \ref{lemma1}, that $\varphi \times \cdots \times \varphi \in \Irr(M)$ extends to $\chi \in \Irr(G)$. Then $\chi_N$ is an irreducible $G$-invariant character of degree $p^{ta}$ and $\Mcd G N=\{1,p^{ta}\}$. By Theorem \ref{Thompson}, $N$ has a normal $p$-complement. Since every character of the normal $p$-complement must be linear, it is abelian, which contradicts $M$ not being abelian.

Suppose now that $S$ is either isomorphic to the alternating group $A_n$ for $n \geq 7$, the Tits group, or a sporadic simple group. It is then possible to find two nonlinear characters $\sigma_1, \sigma_2 \in \Irr(S)$ such that $(\sigma_1(1),\sigma_2(1))=1$ and that both $\tau_1=\sigma_1 \times \cdots \times \sigma_1 \in \Irr(M)$ and $\tau_2 = \sigma_2 \times \cdots \times \sigma_2 \in \Irr(M)$ extend to $G$ (we use Theorems 3 and 4 of \cite{BCLP} and Lemma \ref{lemma1}). Let $\chi_1, \chi_2 \in \Irr(G)$ restricting to $\tau_1$ and $\tau_2$ respectively. Then $\theta_1 = (\chi_1)_N$ and $\theta_2 = (\chi_2)_N$ are irreducible $G$-invariant characters of $N$ with relatively prime degrees $\sigma_1(1)^t$ and $\sigma_2(1)^t$. In particular, $\theta_1(1) \neq \theta_2(1)$, which contradicts $\Mcd G N$ only having two elements.

Therefore, $M$ must be abelian. By induction, $N/M$ is solvable, and thus $N$ is solvable and Theorem A is proven.
\end{proof}

Although a normal subgroup $N$ of $G$ with only two minimal $G$-invariant character degrees is solvable by Theorem A, it is not necessarily nilpotent, and as a consequence not hypercentral in $G$ either (see Example \ref{ej 2}). However, if $N$ is a $p$-group, it is hypercentral in $G$ under the additional hypothesis that all minimal $G$-invariant character degrees of $N$ are powers of $p$.

\begin{proposition}\label{p grupo hipercentral}
Let $N$ be a normal subgroup of the group $G$. If $N$ is a $p$-group for some prime $p$ and $\widehat{\theta}(1)$ is a $p$-power number for all $\widehat{\theta} \in \Min G N$, then $N$ is a hypercentral subgroup of $G$. In particular, $[N, G]\subseteq {\bf \Phi}(G)$.
\end{proposition}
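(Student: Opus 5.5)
We would prove hypercentrality by induction on $|N|$, peeling off a central minimal normal subgroup of $G$ contained in $N$ and using Lemma \ref{induccion}(ii) to pass the hypothesis to the quotient. If $N=1$ there is nothing to prove. Otherwise, choose a minimal normal subgroup $K$ of $G$ with $K\leq N$. Since $N$ is a $p$-group, $K$ is an elementary abelian $p$-group, and $\irr K$, viewed as the dual module, is an irreducible $\F_p G$-module. The aim is to show $K\leq \zent G$. Then Lemma \ref{induccion}(ii) gives that every element of $\Mcd{G/K}{N/K}$ is a $p$-power, so by induction $N/K$ is hypercentral in $G/K$. Since $K$ is central, $N$ is hypercentral in $G$, equivalently $\Gamma^{r+1}_G(N)=1$ for some $r$.

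The key step is to show that $G$ acts on $\irr K$ with orbits of $p$-power length. Let $\lambda\in\irr K$ and take $\theta\in\irr{N\mid\lambda}$. Since $\widehat{\theta}$ is $G$-invariant, Clifford's theorem applied to $K\unlhd G$ gives $\widehat{\theta}_K=e\sum_{\mu\in\lambda^G}\mu$ for some integer $e$. Because $K$ is abelian, comparing degrees yields $|\lambda^G|\cdot e=\widehat{\theta}(1)$. Hence $|\lambda^G|$ divides $\widehat{\theta}(1)$, which is a $p$-power by hypothesis. Now count modulo $p$: $|\irr K|=|K|\equiv 0 \pmod p$, and all nontrivial orbits have size divisible by $p$. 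So the number of $G$-fixed characters of $K$ is divisible by $p$, and there is a nontrivial $G$-invariant $\lambda$. The $G$-fixed points of $\irr K$ form a nonzero submodule, so by irreducibility $G$ fixes every $\lambda\in\irr K$. Since the characters of $K$ separate its elements, we get $[K,G]\leq\bigcap_\lambda\ker\lambda=1$, that is, $K\leq\zent G$. (Alternatively, the fixed linear character gives $[K,G]<K$ via Lemma \ref{[G,N] ker(theta) LinGN}, and minimality forces $[K,G]=1$.)

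For the Frattini statement, suppose $M$ is a maximal subgroup of $G$ with $[N,G]\not\leq M$. Then $N\not\leq M$, so $G=NM$. We would use the standard fact that the hypercenter normalizes every maximal subgroup. One way to see it: take $i$ maximal with $\Gamma^{i}_G(N)\not\leq M$; then $[\Gamma^i_G(N),G]\leq M$, so $\Gamma^i_G(N)$ normalizes $M$, and maximality gives $G=\Gamma^i_G(N)M$. It follows that $M\unlhd G$. Next, $G/M=\Gamma^i_G(N)M/M$, and its commutator subgroup is $[\Gamma^i_G(N),\Gamma^i_G(N)M]M/M\leq\Gamma^{i+1}_G(N)M/M=1$, so $G/M$ is abelian. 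Therefore $[N,G]\leq G'\leq M$, a contradiction. Hence $[N,G]\leq\Phi(G)$.

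The main obstacle is the orbit-length step: we must verify carefully that $|\lambda^G|$ divides $\widehat{\theta}(1)$, which uses that $\widehat{\theta}$ is a genuine $G$-invariant character of $N$ restricting homogeneously over each $G$-orbit on $\irr K$. The rest is a routine fixed-point count.
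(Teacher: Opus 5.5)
Your proof is correct and follows the same route as the paper: a minimal normal subgroup of $G$ inside $N$ is shown to be central because the $G$-orbits on its linear characters have $p$-power length and a count modulo $p$ then forces central action, and Lemma \ref{induccion}(ii) carries the induction. The only differences are minor: you get $|\lambda^G|\mid\widehat{\theta}(1)$ from the restriction $\widehat{\theta}_K$ rather than choosing $L\le\zent N$ so that $I_G(\theta)\le I_G(\lambda)$, and you prove the Frattini inclusion directly instead of citing Corollary 3.9 of \cite{AFJP}.
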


\begin{proof}
Note first that, since $N$ is a $p$-group, $1<\zent N$. Let $L$ be a minimal normal subgroup of $G$ contained in $\zent N$. We claim $L \subseteq \zent G$. Suppose otherwise that $1 \neq [L,G]$. Then by minimality we have $L=[L,G]$ and thus $\text{Lin}_G(L)=\{1_L\}$. Let $\lambda \in \irr L \backslash \{1_L\}$ and consider $\theta \in \irr {N \mid \lambda}$. Since $L \subseteq \zent N$, we have that $\theta_L=\theta(1) \lambda$. Hence $I_G(\theta) \subseteq I_G(\lambda) \subsetneq G$. By hypothesis, $|G:I_G(\theta)|$ is a power of $p$. Then $\widehat{\lambda}(1)=|G:I_G(\lambda)|>1$ is again a $p$-power number. It follows that
$$|L|=1+\sum_{\substack{\lambda \in \irr L \\ \lambda \neq 1_L}}\lambda(1)^2 = 1 + \sum_{\substack{\widehat{\lambda} \in \Min G L \\ \lambda \neq 1_L}} \widehat{\lambda}(1) \lambda(1) = 1+ p \sum_{\substack{\widehat{\lambda} \in \Min G L \\ \lambda \neq 1_L}} \frac{\widehat{\lambda(1)}}{p}\lambda(1),$$
which contradicts the fact that $|L|$ is a power of $p$. Thus, $L \subseteq \zent G$.

By Lemma \ref{induccion}(ii), we have that $\Mcd {G/L} {N/L} \subseteq \Mcd G N$. The factor group $N/L$ is again a $p$-group so, by induction, we may assume that $N/L$ is hypercentral in $G/L$. Then there exists some nonnegative integer $n$ such that
$$[N/L,\underbrace{G/L,\hdots,G/L}_{n \text{-times}}\ ] = 1.$$
Then
$$[N,\underbrace{G,\hdots,G}_{n \text{-times}}\ ] \subseteq L \subseteq \zent G.$$
As a consequence,
$$[N,\underbrace{G,\hdots,G}_{(n+1) \text{-times}}\ ]=1$$
and $N$ is hypercentral in $G$, as desired. Finally, it follows from Corollary 3.9 in \cite{AFJP} that $[N,G] \subseteq \Phi(G)$.
 \end{proof}

 The next corollary follows immediately from Proposition \ref{p grupo hipercentral} when $|\Mcd G N|=2$.

 \begin{corollary} \label{corolario p grupo hipercentral} Let $N$ be a normal $p$-subgroup of a group $G$, for some prime $p$, and $\Mcd G N = \{1,f\}$ with $f=p^a$ for some integer $a$, then $N$ is a hypercentral subgroup of $G$. In particular, $[N,G] \subseteq \Phi(G)$.
 \end{corollary}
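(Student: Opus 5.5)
The plan is to derive this directly from Proposition \ref{p grupo hipercentral}, checking its hypotheses in the present setting. By hypothesis $N$ is a normal $p$-subgroup of $G$, so the only remaining requirement is that $\widehat{\theta}(1)$ be a power of $p$ for every $\widehat{\theta} \in \Min G N$.

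This follows immediately from $\Mcd G N = \{1,f\}$ with $f = p^a$. Every $\widehat{\theta}(1)$ equals either $1 = p^0$ or $p^a$, so both possible values are $p$-powers. Note that a degenerate case with $f=1$, that is $a=0$, is also covered: there $\Mcd G N=\{1\}$ and Lemma \ref{induccion}(i) even gives $N \subseteq \zent G$. It is in any case included in the proposition.

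Proposition \ref{p grupo hipercentral} then yields that $N$ is hypercentral in $G$, and its final assertion gives $[N,G] \subseteq \Phi(G)$. That assertion rests on Corollary 3.9 of \cite{AFJP}.

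There is no genuine obstacle here. The only point to watch is that the proposition allows arbitrary $p$-power degrees, and the two-element degree set is a special case of this. All the substantive work, namely the counting argument showing that a minimal normal subgroup of $G$ inside $\zent N$ is central, and the subsequent induction, has already been carried out in the proof of the proposition.
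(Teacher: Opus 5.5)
Your proposal is correct and matches the paper's argument: the paper likewise derives the corollary immediately from Proposition~\ref{p grupo hipercentral}, since both elements of $\Mcd G N=\{1,p^a\}$ are $p$-powers. Your aside on the degenerate case $a=0$ is harmless and is also covered by the proposition.
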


 \begin{remark}
Both the hypotheses that $N$ is a $p$-group and that $f$ is a $p$-power are necessary, as shown in Example \ref{ej 2}.
\end{remark}

In order to prove Theorem C, we will need an extension of Lemma (12.3) in \cite{Is}. Note that the conditions fixed by the hypotheses on $[N,G]$ in the following lemma do not imply that $[N,G]$ is a minimal normal subgroup of $N$.

\begin{proposition}\label{12.3 mcd} Let $G$ be a finite group and let $N$ be a solvable normal subgroup of $G$ such that $[N,G]$ is the unique minimal normal subgroup of $G$ contained in $N$. Then one of the following situations occurs:
\begin{itemize}
    \item[(a)] $N$ is an abelian $p$-group and $N/(\zent G \cap N)$ is an elementary abelian $p$-group for some prime $p$;
    \item[(b)] $N$ is not abelian and $\cd N=\{1,m\}$ for some integer $m$. Moreover, one of the following sub-cases occurs.
    \begin{itemize}
        \item[(b.1)] $N$ is a $p$-group and $N/(\zent G \cap N)$ is elementary abelian. Also, $m^2= |N:\zent N|$ (so $m$ is a power of $p$), $\Mcd G N = \{1,m \ |\zent N : \zent G \cap N|\}$, and $\Lcd G N= \{1,m^2 \ |\zent N : \zent G \cap N|\}=\{1,|N: \zent G \cap N|\}$. 
        \item[(b.2)] $N$ is a Frobenius group with an abelian Frobenius complement. Also, $[N, G]$ is the Frobenius kernel and is an elementary abelian $p$-group for a prime $p$. We have that $m=|N:[N,G]|$. Moreover, if $\Mcd G N=\{1,f\}$ for an integer $f$, then $p$ does not divide $f$.
    \end{itemize}
\end{itemize}
\end{proposition}

\begin{proof} By hypothesis, we may assume that $[N, G] \neq 1$. Note that since $[N, G] \cdot \unlhd \ G$ and $[N, G] \subseteq N$ where $N$ is solvable, we have that $[N, G]$ is an elementary abelian $p$-group for some prime $p$. We distinguish two cases depending on whether  $\zent G \cap N$ is trivial or not.\\

{\it Case 1}: Suppose that $\zent G \cap N \neq 1$. Since $[N, G]$ is the unique minimal normal subgroup of $G$ contained in $N$, it follows that $[N, G] \subseteq \zent G \cap N$ and $\zent G \cap N$ is a direct product of cyclic groups, which are all normal in $G$. By hypothesis, this implies that $\zent G \cap N$ is a cyclic $p$-group and $[N, G]$ is a cyclic group of $p$ elements. Moreover, $[N, G] < N$. Otherwise, $[N, G]=N$ and thus $N \subseteq \zent G$, which would imply $[N, G]=1$ and contradict our hypotheses.

Let $x \in G$ and $y \in N$. We have $[x,y] \in [N, G] \subseteq \zent G$ and so it can easily be proven that $[x,y^p]=[x,y]^p$.
As $[N, G] \cong C_p$, it follows that $[x,y^p]=1$ for all $x \in G, y \in N$, \textit{i.e.} $y^p \in \zent G$ for all $y \in N$. Moreover, $N'=[N,N] \subseteq [N, G] \subseteq \zent G \cap N$. Hence $N/(\zent G \cap N)$ is an elementary abelian $p$-group.
Since $\zent G \cap N$ is a $p$-group, we deduce that $N$ is also a $p$-group.

If $N'=1$, then in particular $N$ is an abelian $p$-group and statement (a) follows. We may assume that $N$ is not abelian. Then $N'\leq[N, G]$ is a normal subgroup of $G$ contained in $N$, and thus $N'=[N, G]$ by minimality of the latter. If $\theta \in \Irr(N)$ is linear, then $[N, G]=N' \subseteq \ker \theta$. Then by Lemma \ref{[G,N] ker(theta) LinGN}, we have that $\theta=\widehat{\theta}\in \text{Lin}_G(N)$.

Now we consider a nonlinear character $\theta \in \Irr(N)$. Denote by $\widehat{\theta}$ the minimal $G$-invariant character associated to $\theta$. Suppose $1 \neq \ker {\widehat{\theta}}$. We have that 
$$\ker {\widehat{\theta}}=\bigcap_{g \in G} \ker{\theta^{g}}= \bigcap_{g \in G} (\ker \theta)^{g}.$$
Thus, $\ker {\widehat{\theta}} = \text{Core}_G(\ker \theta) \unlhd G$. Since $\ker {\widehat{\theta}} \subseteq N$, we must have that $[N, G] \subseteq \ker {\widehat{\theta}}$. Then $N' = [N,G] \subseteq \ker {\widehat{\theta}} \subseteq \ker \theta$, hence a contradiction with the non-linearity of $\theta$. So $\ker {\widehat{\theta}}=1$.

By Problem 2.13 in \cite{Is}, we have that $\theta(1)=\sqrt{|N:\zent N|}$, for all nonlinear $\theta \in \Irr(N)$. Hence $m=\theta(1)=\sqrt{|N:\zent N|}$ is a constant value on $\Irr(N) \backslash \Lin N$. Thus $\cd N=\{1,m\}$ with $m^2=|N: \zent N|$.

We claim that $\widehat{\theta}$ vanishes on $N \backslash (\zent{G} \cap N)$ for all nonlinear $\theta \in \Irr(N)$. Let $\chi \in \irr G$ such that $\chi_N=e \widehat{\theta}$ and let $\mathcal{R}$  be a representation of $G$ affording $\chi$. Let $g\in N \backslash (\zent{G} \cap N)$. There exists $x \in G$ such that $[g,x]\neq 1$ and so $[g,x]\not \in \ker \chi$. Write $z=[g,x]$. Since $z \in [N, G] \subseteq \zent G$, there exists some $\varepsilon \in \mathbb{C}$ such that $\mathcal{R}(z)=\varepsilon I$. Furthermore, since $z \not \in \ker \chi$, we must have $\mathcal{R}(z) \neq \mathcal{R}(1)$ and $\varepsilon \neq 1$. We have that
$$\mathcal{R}(gz)=\mathcal{R}(g)\mathcal{R}(z)=\varepsilon \mathcal{R}(g),$$
and also
$$\mathcal{R}(gz)=\mathcal{R}(x^{-1}gx).$$
By taking traces, we see that $\chi(g)=\varepsilon\chi(g)$ and thus $\varepsilon \widehat{\theta}(g)=\chi(g)=0$ since $\varepsilon \neq 1$. That
is, $\widehat{\theta}(g)=0$ for all $g\in N \backslash (\zent{G} \cap N)$.

Now, by Lemma (2.29) in \cite{Is},
$$[\widehat{\theta}_{\zent{G} \cap N},\widehat{\theta}_{\zent{G} \cap N}]=|N:\zent{G} \cap N|\,[\widehat{\theta},\widehat{\theta}]=|N:\zent{G} \cap N|\, t_\theta.$$
But $\theta_{\zent{G} \cap N}=\theta(1)\lambda$ for some $G$-invariant $\lambda \in \Lin{\zent{G} \cap N}$, hence $\widehat{\theta}_{\zent{G} \cap N}=\theta(1)(\lambda^{g_1}+\cdots+\lambda^{g_{t_\theta}})=t_\theta \theta(1) \lambda$. Then
$$[\widehat{\theta}_{\zent{G} \cap N},\widehat{\theta}_{\zent{G} \cap N}]=t_\theta^2 \, \theta(1)^2 \,[\lambda,\lambda]=t_\theta^2\, \theta(1)^2 .$$
Therefore $|N:\zent G \cap N|=t_\theta \theta(1)^2$. Using that $\theta(1)^2=|N:\zent N|$, we deduce that $t_\theta=|\zent N:\zent G \cap N|.$

It follows then that
$$\Mcd{G}{N}=\{1,m t_\theta\}=\{1,\sqrt{|N:\zent N|} \, |\zent N:\zent G \cap N|\}$$
and
$$\Lcd{G}{N}=\{1,m^2 t_\theta\}=\{1,|N : \zent G \cap N|\}.$$
This is case (b.1).

\bigskip

{\it Case 2:} Suppose that $\zent G \cap N = 1$. As previously, we know that $[N, G]$ is an elementary abelian $p$-group for some prime $p$.

Assume $\zent N>1$. Since $\zent N$ is a nontrivial normal subgroup of $G$ contained in $N$, we have $[N, G] \subseteq \zent N$. If $x \in G$ and $n \in N$, then $[x,n] \in [N,G] \subseteq \zent N$ and we have
$$[x,n^p]=[x,n]^p=1,$$
using that $[N,G]$ is an elementary $p$-group. Therefore, $n^p \in \zent G \cap N = 1$ for all $n \in N$ and $N$ is an elementary $p$-group. Moreover, since $[N, G] \subseteq \zent N$, we have that $N$ centralizes $[N, G]$. Then, by the three subgroups lemma (see Lemma 4.9 in \cite{Is08}), we have that $[N,N,G]=1$, that is, $N'$ is central in $G$. Then $N' \subseteq \zent G \cap N=1$, \textit{i.e.} $N'=1$, and we are in case (a).

\smallskip

Assume otherwise that $\zent N=1$. Then $N$ is not abelian and $N' \neq 1$. As a consequence, $N'=[N,G]$. Moreover, $N$ cannot be a $p$-group. Consider then a prime $q\neq p$ such that $q \mid |N|$, and let $Q \in \Syl_q(N)$. 

Since $N'Q=[N, G]Q \unlhd N$, by Frattini's argument, we have 
$$N=[N, G]Q {\bf N}_N(Q)=[N, G]{\bf N}_N(Q)$$
and $G=[N, G] {\bf N}_G(Q)$. Moreover, $[N, G] \cap {\bf N}_G(Q) \unlhd {\bf N}_G(Q)$ because $[N,G] \unlhd G$ and $[N, G] \cap {\bf N}_G(Q) \unlhd [N, G]$  since $[N, G]$ is abelian, thus $[N, G] \cap {\bf N}_G(Q) \unlhd G$. If $1 \neq [N, G] \cap {\bf N}_N(Q)$, then by minimality of $[N, G]$ we must have $[N, G]=[N, G] \cap {\bf N}_N(Q)$, that is, $[N, G] \subseteq {\bf N}_N(Q)$. Then $G={\bf N}_G(Q)$. But then $Q$ is a normal subgroup of $G$ contained in $N$, hence $[N, G] \subseteq Q$, which contradicts the fact that $p \neq q$. Thus, $[N, G] \cap {\bf N}_N(Q)=1$ and
$${\bf N}_N(Q) \cong N/[N, G] = N/N'.$$
Therefore ${\bf N}_N(Q)$ is abelian. Consider a nonidentity element $x \in {\bf N}_N(Q)$. Since $N=[N, G] {\bf N}_N(Q)$ and both factors are abelian groups, we have ${\bf C}_{[N, G]}(x) \unlhd N$.

Let us verify that ${\bf C}_{[N, G]}(x)=1$. Suppose there exists a nonidentity $a \in {\bf C}_{[N, G]}(x)={\bf C}_{[N, G]}(x^{-1})$. If $g \in G$, then we may write $x^g=nx$ for some $n \in [N, G]$. But, $[N, G]$ being abelian, we have
$$[a,x^g]=[a,nx]=[a,x][a,n]^x=[a,n]^x=1.$$
Then for all $g \in G$ we have $a \in {\bf C}_{[N, G]}(x^g)$, and thus $$a \in L=\bigcap_{g \in G} ({\bf C}_{[N, G]}(x))^g.$$ Since $L$ is a nontrivial normal subgroup of $G$ contained in $[N, G]$, we deduce by minimality that $L=[N, G]$. In particular, $[N, G] \subseteq {\bf C}_{[N, G]}(x)$. It follows that $x \in \zent N=1$, contradicting the hypothesis that $x \neq 1$. Therefore, ${\bf C}_{[N, G]}(x)=1$ for all $x \in {\bf N}_N(Q) \backslash \{1\}$. Then we have that $N$ is a Frobenius group with Frobenius kernel $[N, G]$. 

Now, if $\psi \in \Irr([N, G])$, given that $[N, G]$ is abelian, then $\psi$ is linear and $\psi(x) \neq 0$ for any $x \in [N, G]$. Since $[N, G]$ is nontrivial by hypothesis, the vanishing-off subgroup satisfies
$$\textnormal{\bf V}(\psi)=\langle g \in G \mid \psi(g) \neq 0 \rangle > 1$$ 
and $|N:[N, G]| \in \cd N$ (see Theorem (12.4) of \cite{Is}). Therefore, $\cd N$ is exactly $\{1,m\}$ with $m=|N:[N, G]|$ (in particular $|\Mcd G N|\geq 2$) and this situation is (b.2).

Finally, suppose that $\Mcd G N = \{1,m\}$ for some integer $m$. In this case, we may partition $\irr N$ as
$$\irr N = \lin G N \sqcup \Delta \sqcup \Xi,$$
where $\Delta$ is a set of representatives of the orbits of the linear characters of $N$ which are not $G$-invariant, and $\Xi$ is a set of representatives of the orbits of the nonlinear irreducible characters of $N$. Note that, since $\cd N = \{1,m\}$, every nonlinear $\theta \in \Irr(N)$ verifies that $f=\widehat{\theta}(1)=t_\theta \theta(1)$ for the same value $t_\theta=f/|N:[N, G]|$. Recall, too, that $|N:[N, G]|$ is the number of linear $G$-invariant characters of $N$.
Then
$$|N|=\sum_{\theta \in \irr N} \theta(1)^2 = \sum_{\theta \in \lin G N} \theta(1)^2 + \sum_{\theta \in \Delta} t_\theta \theta(1)^2 + \sum_{\theta \in \Xi} t_\theta \theta(1)^2,$$
that is,
$$|N|=  |N:[N, G]| + \sum_{\theta \in \Delta} f + \sum_{\theta \in \Xi} \frac{\widehat{\theta}(1)^2}{t_\theta} = |N:[N,G]| + \sum_{\theta \in \Delta} f + \sum_{\theta \in \Xi} f |N:[N, G]| .$$

But this implies that
$$|N|=|N:[N, G]| + K f$$
for some positive integer $K$. Since $p$ divides $|N|$ but not $|N:[N, G]|$, we deduce that $p$ cannot divide $f$ and the result follows.
\end{proof}

\begin{remark}The groups provided in Example \ref{ej 3} show that every case described in Proposition \ref{12.3 mcd} can actually be realized.\end{remark}

\medskip

\begin{proof}[Proof of Theorem C] We may assume $[N,G]>1$, otherwise it is clearly abelian. By theorem A, we know that $N$ is solvable. Let $K$ be a normal subgroup of $G$ contained in $N$ and maximal with respect to the property that $[N,G]$ is not contained in $K$ (it is possible that $K=1$). Denote $T \unlhd G$ such that $T/K=[N,G]K/K$. Then
$$1 \neq T/K=[N/K, G/K] \unlhd G/K.$$
If $1 \neq M/K $ is a minimal normal subgroup of $G/K$ contained in $N/K$, then $M/K$ is abelian and by maximality of $K$ we have that $[N,G] \subseteq M$. We conclude that $M/K=T/K$ and $T/K$ is the unique minimal normal subgroup of $G/K$ contained in $N/K$. By Lemma \ref{induccion} (i), we have that $\Mcd {G/K} {N/K} \subseteq \Mcd G N= \{1, f\}.$ If $\Mcd {G/K} {N/K} = \{1\}$, then $N/K \subseteq \zent {G/K}$ and then $[N,G] \subseteq K$, a contradiction. Therefore, $\Mcd {G/K} {N/K} = \{1, f\}$. By Proposition \ref{12.3 mcd}, we deduce that either $N/K$ is a $p$-group, for some prime $p$, or $N/K$ is Frobenius where $T/K$ is the Frobenius kernel of $N/K$. In both cases, $T/K$ is an elementary abelian $p$-group and $p$ is a prime divisor of order of the subgroup $[N,G]$. Moreover, in the second case, $p$ does not divide $f$.

Now we suppose $N/K$ is a $p$-group.  We have that $[T/K, G/K]= 1$ or $[T/K, G/K]=T/K$. Assume $[T/K, G/K]= 1$, and $T /K$ is central in $G/K$. If every irreducible character of $N/K$ is a linear $G/K$-invariant character, then $N/K$ is central in $G/K$, and this is a contradiction. Therefore, there exists $\theta \in \irr {N/K} \setminus \lin {G/K} {N/K}$.  Let $Q$ be a Sylow $q$-subgroup of $G$, with $q$ a prime distinct from $p$. By coprime action,
$$N/K = [N/K, QK/K]{\bf C}_{N/K}(QK/K)={\bf C}_{N/K}(QK/K).$$
Therefore, $QK/K \subseteq I_{G/K}(\theta)$, for all $q \neq p$. In this case, $|G/K:I_{G/K}(\theta)|$ and $\theta(1)$ are $p$-numbers, thus $f$ is a $p$-power.

Suppose instead $[T/K, G/K]=T/K$. Then $1_{T/K}$ is the unique $G/K$-invariant character of $T/K$. Then
$$|T/K|=1 + \sum_{1_{T/K} \neq \theta \in \irr {T/K}} \theta(1)^2 = 1 + s f,$$
for some integer $s$, and thus $p$ does not divide $f$.

Let $\Omega = \{p=p_1, p_2, \ldots, p_r\}$ the set of prime divisors of the order of $[N,G]$. We may assume that $N/K$ is a $p$-group or that $N/K$ is a Frobenius group, with $p \in \Omega$, and $p$ does not divide $f$. Then $p$ does not divide $\theta(1)$, for every $\theta \in \irr N$. By It\^o-Michler theorem, there exists $P_1$ an abelian normal Sylow $p$-subgroup of $N$, and then $P_1 \unlhd G$. If $[N,G]$ is contained in $P_1$, then it is abelian.

Suppose otherwise that $[N,G]$ is not contained in $P_1$. Let then $K_1 \unlhd G$ be maximal such that $[N,G]$ is not contained in $K_1$ and $P_1 \subseteq K_1 \subset N$. By applying to $K_1$ an argument similar to that used for $K$, we may take a prime $p \neq p_2 \in \Omega$, and $P_2$ a Sylow $p_2$-subgroup of $N$ such that $P_2 \unlhd G$ and $P_2$ is abelian. In this case, either $[N,G]$ is abelian, or $[N,G]$ is not contained in the abelian subgroup $\subseteq P_1 \times P_2$. By applying a recursive argument to the primes of $\Omega$, it follows in finitely many steps that $[N, G]$ is abelian.

In conclusion, we have that either $f$ is a power of $p$ or $[N,G]$ is abelian, and the first part of Theorem C follows.

Finally, suppose that $f$ is a power of $p$. Then it follows from Proposition \ref{p grupo hipercentral} that $N$ is hypercentral in $G$ and $[N,G] \subseteq \Phi(G)$.
\end{proof}

Let us examine the Conjecture B in the case where $f=p$, with $p$ a prime number. We point out that in this case $N'$ is abelian because $\textnormal{cd}(N) \subseteq \{1,p\}$. However, $N'$ does not always coincide with the commutator subgroup 
$[N, G]$. For example, if we consider in GAP the group $G:=\texttt{SmallGroup(216,26)}$, then there exists $C_9 \rtimes C_4 \cong N \unlhd G$ such that $\Mcd G N = \{1,2\}$ and $N' \cong C_{9}$, but $[N,G] \cong C_{18}$. We show that this last commutator is always abelian, as determined by Theorem D.

\begin{proof}[Proof of Theorem D] 
Suppose the result is false and choose $(G, N)$ to be a minimal counterexample with respect to $|G| + |N|$. By Theorem A, we know that $N$ is solvable. Moreover, every nonlinear irreducible character $\theta$ of $N$ is a $G$-invariant character of degree equal to $p$, so $\cd{N}=\{1,p\}$. By Thompson's theorem, we have that $N$ has a normal $p$-complement $A$. If $\lambda \in \Irr(A)$ and $\theta \in \Irr(N \mid \lambda)$, then $\lambda(1)$ divides $\theta(1)$ which divides $p$. Thus $\lambda$ is linear and $A$ is an abelian normal subgroup of $G$.

Let $L$ be a minimal normal subgroup of $G$ contained in $N$. Since $N$ is solvable, we have that $L$ is abelian. By Lemma \ref{induccion}, we have that $\Mcd{G/L}{N/L} \subseteq \{1,p\}$. If $\Mcd{G/L}{N/L}=\{1\}$, then $N/L \subseteq \zent{G/L}$, and thus $[N, G] \subseteq L$, a contradiction. If $\Mcd{G/L}{N/L}=\{1,p\}$, then, by minimality of the counterexample, we have that $[N/L,G/L]$ is abelian. That is, $[N, G]/(L \cap [N, G])$ is abelian, hence $[N, G]' \subseteq L$. By minimality of $L$, we deduce that $[N, G]'=L$ is the unique minimal normal subgroup of $G$ contained in $N$. Then either $\oh p N =1$ or $\oh {p'} N=1$.
We distinguish two possible cases: either $A=1$ and $N$ is a $p$-group or $A\ne 1$ and $\oh p N=1$.

\textit{Case 1:} Suppose that $N$ is a $p$-group. By Proposition \ref{p grupo hipercentral}, $N$ is hypercentral in $G$, and then $[N,G] < N$. Since $\cd{N}=\{1,p\}$, it follows that  $\Mcd{G}{[N,G]}=\{1,p\}=\cd{[N,G]}=\{1,p\}$. By the minimality of $(G,N)$, we have that $[[N, G],G]$ is abelian. Moreover, $L$ is also a hypercentral subgroup of $G$, hence $1=[L,G]< L$ and $L$ is a central subgroup of $G$ of order $p$. 
 
 Let $\lambda \in \irr{[N,G]} \setminus {\rm Lin}([N,G])$ and $\widehat{\lambda} \in {\rm Min}_G([N,G])$. If $1 \neq {\rm ker}( \widehat{\lambda}) \unlhd [N,G]$, then $L \subseteq {\rm ker}(\widehat{\lambda}) \subseteq {\rm ker}(\lambda)$, and then $\lambda$ is linear, a contradiction. Hence ${\rm ker}(\widehat{\lambda})=1$. Now, let $g \in [N, G] \setminus \text{\bf Z}([N, G]))$. There exists an element $b \in [N, G]$ such that $z=[g,b] \neq 1$. We also have that $z \in L \subseteq \text{\bf Z}(G)$. Let $\mathcal{R}$ be a representation of $[N,G]$ affording $\lambda$. Then $\mathcal{R}(gz)=\mathcal{R}(b^{-1}gb)=\mathcal{R}(g)\mathcal{R}(z)$. Since $z \in \text{\bf Z}([N,G])$, we have that $\mathcal{R}(b^{-1}gb)= \alpha \mathcal{R}(g)$, for some $\alpha \in \mathbb{C}$. By taking traces, we obtain that either $\alpha = 1$ or $\lambda(g)=0$. The first case is not possible, as it would imply that $z \in \ker \lambda$ and then
$$z \in \left(\bigcap _{x \in N} \ker{\lambda^x} \right)\cap L= {\rm ker}( \widehat{\lambda}) \cap L=1,$$
a contradiction. Therefore, $\lambda(g)=0$, for every $g \in [N, G] \setminus \text{\bf Z}([N,G])$. Since $\lambda_{\text{\bf Z}([N, G])}=\lambda(1)\beta$ with $\beta \in \text{Lin}(\text{\bf Z}([N, G]))$, by Lemma (2.29) of \cite{Is}, it follows that
$$\lambda(1)^2=[\lambda_{\text{\bf Z}([N, G])}, \lambda_{\text{\bf Z}([N, G])}]=|[N, G]:\text{\bf Z}([N, G])|\,[\lambda, \lambda]=|[N, G]:\text{\bf Z}([N, G])|.$$
Hence,
$$\text{cd}([N, G])=\left\{1, \sqrt{|[N, G]:\text{\bf Z}([N, G])|}\right\}=\{1,p\}.$$
Therefore, $|[N, G]:\text{\bf Z}([N, G])|=p^2$.

We now proceed to prove that ${\bf Z}([N, G])$ is central in $G$.  We take $\mu \in \Irr({\bf Z}([N, G]))$ and $\lambda \in \Irr([N, G]\ |\ \mu)$ such that $\lambda(1)=p$. We have that $\lambda$ and $\mu$ are $G$-invariant characters of $[N,G]$ and ${\bf Z}([N, G])$, respectively. Then $ [{\bf Z}([N, G]),G]\subseteq \ker{\lambda}\cap {\bf Z}([N, G])=\ker\mu \subseteq \ker\lambda$. Suppose that $[{\bf Z}([N, G]),G] \neq 1$, then $L \subseteq [{\bf Z}([N, G]),G]\subseteq  \ker\lambda$ and $\lambda$ is linear, a contradiction. Then ${\bf Z}([N, G]) \subseteq {\bf Z}(G)$, as claimed.

We show that $N'$ is non-central in $G$. We assume that $N' \subseteq {\bf Z}(G)$. As previously seen, $[[N, G],G] = [G, [N,G]]$ is abelian. Thus, 
$$[G,[N, G],[[N, G],G]]=1$$
and, since $N'$ is central in $G$, we have
$$[[N, G],[[N, G],G],G]=1.$$
According to the three subgroups lemma, we obtain that
$$[[N, G],G],G,[N, G]]=1$$
and 
$$[[[N, G],G],G] \subseteq {\bf Z}([N, G]) \subseteq {\bf Z}(G) \cap N.$$
Consequently, 
$$[[[N, G],G],G,N]=1$$
and, as $[N, [[N,G],G]] \subseteq N' \subseteq {\bf Z}(G)$, we have that $[N, [[N, G],G],G]=1$. Using the three subgroups lemma again, we obtain that
$$[G,N,[[N, G],G]]=1$$
and $[[N, G],G] \subseteq {\bf Z}([G, N])={\bf Z}([N, G]) \subseteq {\bf Z}(G) \cap N$. Therefore
$$[G, [N, G],N]=1.$$
As $N' \subseteq {\bf Z}(G)$, we also have that
$$[[N, G],N,G]=1.$$
Hence, by the three subgroups lemma, we obtain that
$$L=[[N, G],[N, G]]=[N,G,[N, G]]=1,$$
which leads to a contradiction.

Therefore, we can consider that $N'$ is not central in $G$. Since $\cd N=\{1,p\}$, by Corollary (12.6) in \cite{Is}, we have that $N'$ is abelian. Let $U=N'{\bf Z}([N, G])$, which is an abelian subgroup of $[N,G]$. If $U={\bf Z}([N, G])$, then $N' \subseteq {\bf Z}(G)$, and this is not possible. Therefore, $U \neq {\bf Z}([N, G])$ and $|U: {\bf Z}([N, G])|=p$.

Next, we prove that $[U,G]=[N', G]\neq 1$ is central in $G$. For that purpose, we claim that $U$ is a $G$-invariant $nMI$-subgroup with $|\Lcd G U |\leq 2$ (according to the notation given at the end of Section 2). Let $\eta \in \irr U$. If $\eta \in \mathrm{Lin}_G(U)$, considering $H_{\eta}=U$ and $\lambda_{\eta}=\eta$, it follows that $(H_{\eta},\lambda_{\eta})$ is a linear $G$-invariant character pair with respect to $\eta$. Suppose that $\eta$ is not a $G$-invariant character. Then $\rho=\eta_{{\bf Z}([N, G])} \in {\rm Lin}_{G}({\bf Z}([N, G]))$ and $\rho^U$ is  exactly a sum of $G$-conjugates of $\eta$. Hence, $\irr {G \mid \eta}=\irr {G \mid \rho}$ and $(H_{\eta}={\bf Z}([N, G]),\lambda_{\eta}=\rho)$ is a linear $G$-invariant character pair with respect to $\eta$.  Since  $ \Lcd G U = \Mcd G U \subseteq \{1,p\}$, by Theorem \ref{nMI} we have that $[U,G,G]=1$ and $[U,G]$ is central in $G$.  In this situation, we have that $[N',G,G]=1$ and $[G,N',G]=1$. By the three subgroups lemma, it follows that $[G,G,N']=1$ and then $[G,N,N']=1$, which implies that $N' \subseteq {\bf Z}([N, G]) \subseteq {\bf Z}(G)$, a contradiction.

\bigskip

{\it Case 2:} Suppose that $A\ne 1$ and $\oh p N=1$. We have that $N=AP$, where $P$ is a Sylow $p$-subgroup of $N$. Since $\cd{N}=\{1, p\}$, by Theorem (12.5) of \cite{Is}, we have that $N$ has an abelian normal subgroup $T \unlhd N$ of index $p=|N:T|$.
By coprime action, we have
$$A= [A,P] \times  {\bf C}_A(P)$$
and $G=N{\bf N}_G(P)=A{\bf N}_G(P)$ by Frattini's argument, thus $[A,P]$ and ${\bf C}_A(P)$ are normal in $G$. Then, $L= [N,G]' \subset [A, P]$ and $C_A(P)=1$. We deduce that $A=[A, P]=[A,G]$ and $[N,G]=AP_0$, where $P_0=P \cap [N,G] \neq 1$. Moreover, $C_{P}(A) \unlhd G$ and, as  $\oh p N=1$, we conclude $C_{P}(A)=1$, for all Sylow $p$-subgroups $P$ of $N$. As a consequence, the Sylow $p$-subgroup of $T$ is trivial and $T=A$. Hence, $N=A\langle y \rangle$ with $y^p=1$. We have that $P=\langle y \rangle$, $N=AP=AP_0=[N,G]$, thus the principal character of $N$ is the unique linear $G$-invariant character of $N$. Therefore, since $\Mcd{G}{N}=\{1, p\}$, we have
$$|A|p=|N|= 1 +\sum_{\theta \notin {\rm Lin}_G(N)} \theta(1)^2=1 + k p,$$
for some integer $k$, a contradiction. This fact completes the proof of Theorem D.
\end{proof}

\bigskip

\section{Some examples} \label{sección ejemplos}

In this final section, we give a few examples to illustrate the main results and show the necessity of some of the hypotheses. The notation of the groups and their normal subgroups all refer to the Small Group library in GAP (see \cite{GAP}).

\medskip

We first present some considerations regarding Lemma \ref{centro} and Proposition \ref{burnside}, as well as the properties of minimal invariant characters.

\begin{example} \label{ej 1}
 Take $G=D_{18}$ and $N=C_3$. Let $\theta=1_N$ and $\chi$ be the only irreducible character of $G$ of degree $2$ lying over $\theta$. Let also $g \in N \backslash \{1\}$. Then $|g^G|=2$, and thus $(\chi(1),|g^G|)\neq 1$, but $\theta$ is $G$-invariant, so $(\widehat{\theta}(1),|g^G|)=(\theta(1),|g^G|)=1$. Then, as stated in Remark \ref{remark burnside}, the hypothesis $(\widehat{\theta}(1),|g^G|)=1$ in Proposition \ref{burnside} does not imply that $(\chi(1),|g^G|)=1$, so Burnside's theorem cannot be applied directly to $\chi$ and $G$.

Now, denote ${\theta_1,\theta_2}=\irr N \backslash \{1_N\}$. We have that $\widehat{\theta_1}=\widehat{\theta_2}=\theta_1+\theta_2$, with $\zent{\widehat{\theta_1}}=\{1\}$. We have as well that $\widehat{\theta_1}(g)=-1$. Then, although it is clearly true that $(\theta_1(1),|g^N|)=1$, we do not have that $g \in \zent{\widehat{\theta_1}}$ nor $\widehat{\theta_1}(g)=0$. Therefore, Proposition \ref{burnside} needs stronger hypotheses than applying Burnside's theorem to $N$ does.

Note as well that $\zent {\theta_1} = \zent {\theta_2}= N$. Thus $\zent {\theta_1} \cap \zent {\theta_2}= N$ and the inclusion $\zent {\widehat{\theta_1}} \subseteq \bigcap_{x \in G} \zent{\theta_1}^x$ given by Lemma \ref{centro} is strict in this case.
\end{example}

In the remaining examples, we refer to results presented in Section \ref{sección main results}.

\begin{example} \label{ej 2} The concern of Proposition \ref{p grupo hipercentral} and Corollary \ref{corolario p grupo hipercentral} are normal subgroups $N$ which are $p$-groups. This hypothesis cannot be easily weakened: take, for instance, $G=D_{12}$ and $N=S_3 \unlhd G$. Then $\Mcd G N = \{1,2\}$ and $N'=[N,G]=[N,G,G]$, thus $N$ is not nilpotent nor hypercentral in $G$. In fact, even when $\Mcd G N=\{1,p^a\}$ and $N$ is nilpotent, it does not need be hypercentral if it is not a $p$-group. To see this, let $G=\texttt{SmallGroup(36,3)}$ and $N\cong C_6 \times C_2$. Then $\Mcd G N = \{1,3\}$ but $[N,G,G]=[N,G]$, so $N$ cannot be hypercentral in $G$.

Suppose now $N$ is a $p$-group. Proposition \ref{p grupo hipercentral} does not hold either if $\Mcd G N$ contains an integer that is not a power of $p$. Let indeed $G=A_4$ and $N=C_2 \times C_2$. Then $\Mcd G N = \{1,3\}$ and $N=[N,G]$.

When $N$ satisfies the hypotheses of Corollary \ref{corolario p grupo hipercentral} and $N=G$, that is, when $G$ is a $p$-group and $|\cd G|=2$, then the proof of Theorem (12.14) in \cite{Is} (using Theorem (12.5) in the same book) shows that the nilpotency class of $G$ is at most $3$. Within the context of minimal $G$-invariant characters, one may wonder whether the hypercentral $G$-length of a proper normal subgroup $N$ is also bounded by $3$. However, if we consider $G=\texttt{SmallGroup(256,525)}$ and  $N$ any normal subgroup of $G$ of size $128$, then $\Mcd G N = \{1,2\}$ and the hypercentral $G$-length of $N$ is $6$.
\end{example}

\begin{example}\label{ej 3} Lastly, we list a series of groups satisfying every possible situation given by Proposition \ref{12.3 mcd}.
\vspace*{-10pt}
\newline \begin{itemize}
\item[(i)] Let us take $G=\texttt{SmallGroup(108,21)}$ and $N$ the only normal subgroup of $G$ of size $4$ such that $[N,G]$ is the unique minimal normal subgroup of $G$ contained in $N$. In this case, $N \cong C_2 \times C_2$ is an elementary abelian $2$-group, and $\zent G \cap N = 1$. This is situation (a).
\item[(ii)] Take again $G=\texttt{SmallGroup(108,21)}$ and $N$ the only normal subgroup of $G$ of size $9$ such that $[N, G]$ is the unique minimal normal subgroup of $G$ contained in $N$. Then $N \cong C_3 \times C_3$ and $N/(\zent G \cap N) \cong C_3$. This is again situation (a), but in this case we have $1<\zent G \cap N \cong C_3$.
\item[(iii)] Take $G= C_5 \times D_8$ and $N= D_8$. Then $\zent N = \zent G \cap N = C_2$ and $\cd N = \{1,2\}$. This is situation (b.1).
\item[(iv)] Take $G=\texttt{SmallGroup(32,50)}$. Then there exists $(C_4 \times C_2) \rtimes C_2 \cong N \unlhd G$ such that $[N,G]$ is the unique minimal normal subgroup of $G$ contained in $N$. Moreover, $\cd N = \{1,2\}$, $\zent G \cap N \cong C_2$ and $N/(\zent G \cap N) \cong C_2 \times C_2 \times C_2$, but $C_4 \cong \zent N > \zent G \cap  N$. This is situation (b.1).
\item[(v)] Take $G=D_{12}$ and $N=S_3$. Then $N$ is a Frobenius group with kernel $[N,G]=C_3$ and $\Mcd G N = \{1,2\}$. This is situation (b.2).
    \item[(vi)] Take $G=\texttt{SmallGroup(200,43)}$ and $N$ the largest normal subgroup of $G$ such that $[N, G]$ is the unique minimal normal subgroup of $G$ contained in $N$. We have that $N \cong (C_5 \times C_5) \rtimes C_2$. Then $N$ is a Frobenius group with kernel $[N, G] \cong C_5 \times C_5$, and $\Mcd G N = \{1,4,8\}$. This is, once again, situation (b.2). Note that, in this case, the additional hypothesis $|\Mcd G N|=2$ is not satisfied.
\end{itemize}
\end{example}

\noindent \textbf{Acknowledgment} The authors would like to thank Zeinab Akhlaghi for many helpful conversations on this topic.

\bigskip

\noindent \footnotesize{\textsc{Mar\'{\i}a Jos\'e Felipe, Institut Universitari de Matem\`atica Pura i Aplicada, \newline
Universitat Polit\`ecnica de Val\`encia, 
Val\`encia, Spain.} \newline
\texttt{mfelipe@mat.upv.es}}

\bigskip

\noindent \footnotesize{\textsc{Iris Gilabert, Institut Universitari de Matem\`atica Pura i Aplicada, \newline
Universitat Polit\`ecnica de Val\`encia, 
Val\`encia, Spain.} \newline
\texttt{igilman@posgrado.upv.es}}

\bigskip

\noindent \footnotesize{\textsc{Lucia Sanus, Departament de Matem\`atiques, Facultat de
 Matem\`atiques, \newline
Universitat de Val\`encia,
46100 Burjassot, Val\`encia, Spain.} \newline
\texttt{lucia.sanus@uv.es}}

\end{document}